\newtheorem{thm}{Theorem}[section]
\newtheorem{lemma}{Lemma}[section]
\newcommand{\R}{\mathbb R}
\newcommand{\eps}{\varepsilon}
\newcommand{\dd}{\, \mathrm{d}}
\newcommand{\abs}[1]{\left\lvert #1 \right\rvert}
\newcommand{\norm}[1]{\left\lVert #1 \right\rVert}
\newcommand{\ddt}{\frac{\mathrm{d}}{\mathrm{d}t}}
\numberwithin{equation}{section}
\title[$L^p$ regularity of solutions to the gHS system]{On the $L^p$ regularity of solutions to the generalized Hunter-Saxton system}
\author[Choi]{Jaeho Choi}
\address{Department of Mathematics and Statistics, Williams College, Williamstown, MA 01267, USA}
\email{jc14@williams.edu}
\thanks{}
\author[Krishna]{Nitin Krishna} 
\address{Department of Mathematics, The University of Chicago, Chicago, IL 60637, USA}
\email{nakrishna@uchicago.edu}
\thanks{}
\author[Magill]{Nicole Magill} 
\address{Department of Mathematics, Quest University, Squamish, BC V8B 0N8, Canada}
\email{nicole.magill@questu.ca}
\thanks{}
\author[Sarria]{Alejandro Sarria} 
\address{Department of Mathematics and Statistics, Williams College, Williamstown, MA 01267, USA}
\email{Alejandro.Sarria@williams.edu}
\thanks{This work was done at the 2016 Williams College SMALL REU program. The authors were partially supported by NSF REU grant DMS-1347804 and the Williams College Science Center. The first author was also partially supported by the Williams College Finnerty Fund.}
\subjclass[2010]{35B44, 35B10, 35B65, 35Q35, 35B40.}
 \keywords{Generalized Hunter-Saxton system, Blowup, Global regularity.}
\begin{document}

\begin{abstract}
The generalized Hunter-Saxton system comprises several well-known models from fluid dynamics and serves as a tool for the study of fluid convection and stretching in one-dimensional evolution equations. In this work, we examine the global regularity of periodic smooth solutions of this system in $L^p$, $p \in [1,\infty)$, spaces for nonzero real parameters $(\lambda,\kappa)$. Our results significantly improve/extend those by Wunsch et al.~\cite{Wun09, Wun10, WW12} and Sarria~\cite{Sar15}. Furthermore, we study the effects that different boundary conditions have on the global regularity of solutions by replacing periodicity with a homogeneous three-point boundary condition and establish finite-time blowup of a local-in-time solution of the resulting system for particular values of the parameters.
\end{abstract}

\maketitle

\section{Introduction} \label{sec:intro}

We are concerned with the $L^p$, $1\leq p<+\infty$, regularity of solutions to the system 
\begin{equation} \label{eq:ghs}
\begin{cases}
u_{xt} + uu_{xx} - \lambda u_x^2 - \kappa \rho^2 = I(t), & x\in[0,1],\quad t > 0, \\
\rho_t + u\rho_x = 2\lambda \rho u_x, & x\in[0,1],\quad t > 0, \\
u(x,0) = u_0(x), \quad \rho(x,0) = \rho_0(x), & x \in [0,1],
\end{cases}
\end{equation}
where $\lambda$ and $\kappa$ are nonzero real parameters, the nonlocal term $I(t)$ is given by
\begin{equation} \label{eq:nonlocal}
I(t) = -\kappa \int_0^1 \rho^2 \dd x - (1 + \lambda) \int_0^1 u_x^2 \dd x,
\end{equation}
and solutions are subject to periodic boundary conditions
\begin{equation} \label{eq:pbc}
u(0,t) = u(1,t), \qquad u_x(0,t) = u_x(1,t), \qquad \rho(0,t) = \rho(1,t).
\end{equation}

System~\eqref{eq:ghs} was first introduced by Wunsch~\cite{Wun10} as the \emph{generalized Hunter-Saxton system} due to its connection, via $(\lambda,\kappa)=(-1/2,\pm1/2)$, to the Hunter-Saxton (HS) system. Both models have been studied extensively in the literature (see, e.g.,~\cite{WW12, Lenells2, Liu2, Moon13}, and references therein). The HS system is a particular case of the Gurevich-Zybin system describing the formation of large scale structure in the universe (c.f.~\cite{Pavlov1}). It also arises as the ``short-wave'' limit of the Camassa-Holm (CH) system~\cite{Aconstantin0, Escher0}
\begin{equation} \label{gCH}
\begin{cases}
m_t+2u_xm+um_x+k\rho\rho_x=0, &\quad k=\pm1, \\
\rho_t+(u\rho)_x=0, \\
m\equiv u-u_{xx},
\end{cases}
\end{equation}
which is in turn derived from the Green-Naghdi equations~\cite{Johnson1}, widely used in coastal oceanography to approximate the free-surface Euler equations. It is worth noting that for $\rho\equiv0$, the CH system~\eqref{gCH} reduces to the well-known CH equation, a nonlinear dispersive wave equation that arises in the study of propagation of unidirectional irrotational waves over a flat bed, as well as water waves moving over an underlying shear flow. The CH equation is completely integrable, has an infinite number of conserved quantities, and its solitary wave solutions are solitons~\cite{Camassa1}; it also admits ``peakons'' and ``breaking wave'' solutions. We direct the reader to~\cite{Camassa1, Dullin1, Aconstantin2, Aconstantin0, Escher0, Lenells2}, and references therein, for additional background and results on the CH equation and system.

Lastly, when $\rho\equiv0$ or $\rho=\sqrt{-1}\,u_x$, system~\eqref{eq:ghs} becomes the generalized inviscid Proudman-Johnson (giPJ) equation~\cite{Proudman1, Okamoto1, Wunsch1, SS13, SS15}, comprising:
\begin{itemize}
\item for $\lambda=-1$, the Burgers' equation of gas dynamics;
\item for $\lambda=\frac{1}{n-1}$, stagnation point-form solutions~\cite{Childress, Saxton1, Sarria3} of the $n-$dimensional incompressible Euler equations;
\item for $\lambda=-\frac{1}{2}$, the HS equation, describing the orientation of waves in massive director field nematic liquid crystals~\cite{Hunter1}. 
\end{itemize}

From a more heuristic point of view,~\eqref{eq:ghs} may serve as a tool to better understand one-dimensional fluid convection and stretching. More specifically, differentiating~\eqref{eq:ghs}i) in space and setting $\omega=-u_{xx}$ yields
\begin{equation} \label{eq:diff}
\begin{cases}
\;\;\omega_{t}+\underbrace{u\omega_{x}}_{\textup{convection}}+\;(1-2\lambda)\underbrace{\omega u_{x}}_{\textup{stretching}}+2\kappa\underbrace{\rho\rho_x}_{\textup{coupling}}=0,
\\
\;\;\rho_t+\underbrace{u\rho_x}_{\textup{convection}}=2\lambda u_x\rho.
\end{cases}
\end{equation}
The nonlinear terms in~\eqref{eq:diff}i) represent the competition in fluid convection between nonlinear steepening and amplification due to $(1-2\lambda)$-dimensional stretching and $2\kappa$-dimensional coupling~\cite{Holm1, Wun10}, with the parameters $\lambda$ and $\kappa$ measuring the ratio of stretching to convection and the impact of the coupling between $u$ and $\rho$, respectively.

\subsection{Previous results.} \label{subsec:prev}
Local well-posedness of~\eqref{eq:ghs} in particular Sobolev spaces has been established in~\cite{WW12}; see also~\cite{Wun10}. As for global well-posedness, Sarria~\cite{Sar15} investigated the $L^\infty([0,1])$ regularity of solutions arising from a large class of smooth initial data by deriving general representation formulae for solutions of~\eqref{eq:ghs}--\eqref{eq:pbc} along characteristics for $\lambda\neq0$. For convenience of the reader, we recall the representation formulae and the main results of~\cite{Sar15} below; for additional regularity results, the reader may refer to~\cite{Wun09, Wun10, WW12, ML12, Moon13}.

\smallskip

For as long as a solution exists, define characteristics $\gamma$ via the initial value problem
\begin{equation*} \label{eq:charIVP}
\gamma_t(x,t) = u(\gamma(x,t),t), \qquad \gamma(x,0) = x.
\end{equation*}
Then for $\lambda\neq0$, 
\begin{equation} \label{eq:ux}
u_x(\gamma(x,t),t) = \frac{\bar{\mathcal{P}}_0(t)^{-2\lambda}}{\lambda \eta(t)} \left\{ \frac{\mathcal J(x,t)}{\mathcal{Q}(x,t)} - \frac{1}{\bar{\mathcal{P}}_0(t)} \int_0^1 \frac{\mathcal J(x,t)}{\mathcal{Q}(x,t)^{1+\frac{1}{2\lambda}}} \dd x \right\}
\end{equation}
and
\begin{equation} \label{eq:rho}
\rho(\gamma(x,t),t) = \frac{\rho_0(x)}{\mathcal{Q}(x,t)} \left( \int_0^1 \frac{\mathrm{d}x}{\mathcal{Q}(x,t)^{\frac{1}{2\lambda}}} \right)^{-2\lambda},
\end{equation}
where
\begin{equation} \label{eq:P0}
\bar{\mathcal{P}}_0(t) = \int_0^1 \frac{\mathrm{d}x}{\mathcal{Q}(x,t)^{\frac{1}{2\lambda}}},\qquad 
\mathcal J(x,t) = 1 - \lambda \eta(t) u_0'(x)
\end{equation}
and
\begin{equation} \label{eq:Q}
\mathcal{Q}(x,t) = c(x)\eta(t)^2 - 2\lambda u_0'(x) \eta(t) + 1,\qquad c(x) = \lambda \left(\lambda u_0'(x)^2 - \kappa \rho_0(x)^2 \right).
\end{equation}
The strictly increasing function $\eta:[0,+\infty)\to[0,+\infty)$ satisfies the IVP
\begin{equation} \label{eq:etaivp}
\eta'(t) = \bar{\mathcal{P}}_0(t)^{-2\lambda}, \qquad \eta(0) = 0,
\end{equation}
and the ``Jacobian'', $\gamma_x$, is given by
\begin{equation} \label{eq:gammax}
\gamma_x(x,t) =\bar{\mathcal{P}}_0(t)^{-1}\mathcal{Q}(x,t)^{-\frac{1}{2\lambda}}.
\end{equation}
Lastly, the time variable $t$ obeys
\begin{equation} \label{eq:t}
 t(\eta) = \int_0^\eta \left( \int_0^1 \frac{\mathrm{d}x}{(c(x)\mu^2 - 2\lambda u_0'(x)\mu + 1)^{\frac{1}{2\lambda}}}\right)^{2\lambda} \dd \mu.
\end{equation}

\bigskip

\noindent Next we summarize the main results of~\cite{Sar15}; additional details on the qualitative behavior of solutions may be found therein.

\begin{thm}[global-in-time solutions] \label{thm:global}  
Suppose $u_0(x)$ and $\rho_0(x)$ are smooth and satisfy~\eqref{eq:pbc}. Then the solution $(u,\rho)$ of~\eqref{eq:ghs}--\eqref{eq:pbc} stays smooth for all time if any of the following hold:
\begin{enumerate}
\item $\lambda\kappa<0$ and $\rho_0$ never vanishes;
\item $\lambda\kappa<0$ and $\rho_0$ vanishes at finitely many points $x_i\in[0,1]$, $1\leq i\leq n$, with $\lambda u_0'(x_i)\leq0$;
\item $(\lambda,\kappa)\in(0,1]\times\mathbb{R}^-$ and $\rho_0$ vanishes at finitely many points $x_i\in[0,1]$, $1\leq i\leq n$, with $\lambda u_0'(x_i)>0$.
\end{enumerate}
\end{thm}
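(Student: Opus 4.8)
The plan is to read off global regularity directly from the representation formulae \eqref{eq:ux}--\eqref{eq:gammax}, reducing everything to the positivity of the quantity $\mathcal Q$. Since $u_x$, $\rho$ and the Jacobian $\gamma_x$ are all built from nonnegative powers of $\mathcal Q^{-1}$, the solution stays smooth for as long as $\inf_{x\in[0,1]}\mathcal Q(x,\eta)>0$, and a finite-time singularity can occur only if $\mathcal Q$ is driven to zero at some first critical value $\eta^\ast\in(0,\infty)$ attained in finite time $t(\eta^\ast)<\infty$. Working in the increasing time-substitute $\eta$ of \eqref{eq:etaivp}, I would first view $\mathcal Q(x,\cdot)=c(x)\eta^2-2\lambda u_0'(x)\eta+1$ as a quadratic in $\eta$ and record that, using $c=\lambda(\lambda u_0'^2-\kappa\rho_0^2)$, its discriminant simplifies to $\Delta(x)=4\lambda^2u_0'(x)^2-4c(x)=4\lambda\kappa\,\rho_0(x)^2$. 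All three cases are then governed by the sign of $\Delta$ and by the location of the real roots of $\mathcal Q$ relative to the admissible range $\eta\ge0$.

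For cases (1) and (2) I would exploit that $\lambda\kappa<0$ forces $\Delta(x)\le0$. Wherever $\rho_0(x)\neq0$ one has $\Delta(x)<0$, so $\mathcal Q(x,\cdot)$ has no real root; since $\mathcal Q(x,0)=1>0$ and a short check gives $c(x)>0$ under $\lambda\kappa<0$, the parabola stays positive with minimum value $-\lambda\kappa\rho_0(x)^2/c(x)>0$. In case (1) this is uniformly bounded below by compactness, so $\mathcal Q\ge\delta>0$ for all $\eta\ge0$ and no breakdown occurs. In case (2), at each zero $x_i$ of $\rho_0$ one has $\Delta(x_i)=0$ and a double root at $\eta_i=1/(\lambda u_0'(x_i))$; the hypothesis $\lambda u_0'(x_i)\le0$ places this root at $\eta_i\le0$ (or collapses $\mathcal Q(x_i,\cdot)\equiv1$ when $u_0'(x_i)=0$), so $\mathcal Q(x_i,\eta)>0$ throughout $\eta\ge0$. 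Finiteness of the $x_i$ and continuity then give $\inf_x\mathcal Q(x,\eta)>0$ on every compact $\eta$-interval, and a standard continuation argument upgrades this to global-in-$t$ smoothness.

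Case (3) is where the real work lies. Here $\lambda>0,\ \kappa<0$ again give $\Delta\le0$, but at each zero $x_i$ of $\rho_0$ the double root $\eta_i=1/(\lambda u_0'(x_i))$ is now positive, so $\mathcal Q(x_i,\eta^\ast)=0$ at the first value $\eta^\ast=\min_i\eta_i>0$: naively a finite-$\eta$ breakdown. The crux is to show that the restriction $\lambda\le1$ prevents this from happening in finite time. I would compute, with $a=u_0'(x_i)$, the identity $\mathcal Q(x,\eta^\ast)=a^{-2}\big[(u_0'(x)-u_0'(x_i))^2-\tfrac{\kappa}{\lambda}\rho_0(x)^2\big]$, a sum of squares vanishing only at $x_i$ and to second order there, so that $\mathcal Q(x,\eta^\ast)\sim A\,(x-x_i)^2$. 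Consequently $\bar{\mathcal P}_0(\eta^\ast)=\int_0^1\mathcal Q(x,\eta^\ast)^{-1/2\lambda}\dd x$ behaves like $\int|x-x_i|^{-1/\lambda}\dd x$ and \emph{diverges} precisely because $1/\lambda\ge1$. Through \eqref{eq:etaivp} this blowup of $\bar{\mathcal P}_0$ makes $\eta'=\bar{\mathcal P}_0^{-2\lambda}\to0$ as $\eta\uparrow\eta^\ast$, so that $\eta^\ast$ is only approached as $t\to\infty$, keeping $\mathcal Q>0$ at every finite time.

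The main obstacle is making this last step quantitative. One must extract the sharp rate at which $\bar{\mathcal P}_0(\eta)\to\infty$ as $\eta\uparrow\eta^\ast$ by a Laplace/Watson-type local analysis near the degenerate point $(x_i,\eta^\ast)$ — where the full two-variable expansion $\mathcal Q\approx A\xi^2+2D\xi s+Bs^2$, with $\xi=x-x_i$ and $s=\eta^\ast-\eta$, enters — and then feed this rate into \eqref{eq:t} to decide whether $t(\eta^\ast)=+\infty$, while simultaneously controlling the competing singular integral $\int_0^1\mathcal J\,\mathcal Q^{-1-1/2\lambda}\dd x$ from \eqref{eq:ux} and the ratio $\rho_0/\mathcal Q$ from \eqref{eq:rho} so that $u_x$ and $\rho$ remain bounded up to $\eta^\ast$. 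It is exactly the interplay between the divergence exponent of $\bar{\mathcal P}_0$ and the power $2\lambda$ appearing in \eqref{eq:t} that pins down the admissible parameter range, and getting this balance sharp — rather than the elementary root analysis of the earlier cases — is the delicate heart of the argument.
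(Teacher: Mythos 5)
The paper does not prove Theorem~\ref{thm:global}; it is imported verbatim from~\cite{Sar15}, so there is no in-paper proof to compare against. Measured against the framework the paper does set up (Section~\ref{subsec:blowup}, Lemmas~\ref{lem:estlamneg}--\ref{lem:estlampos}, Appendix~\ref{app:soln}), your overall strategy is the right one: the discriminant computation $\mathcal D=4\lambda\kappa\rho_0^2$, the classification of the roots of $\mathcal Q$, and, in case (3), the identity $\mathcal Q(x,\eta^*)=a^{-2}\bigl[(u_0'(x)-u_0'(x_i))^2-\tfrac{\kappa}{\lambda}\rho_0(x)^2\bigr]$ are all correct, and your mechanism for case (3) --- divergence of $\bar{\mathcal P}_0$ at $\eta^*$ forcing $t(\eta^*)=+\infty$ --- is exactly the right one. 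The ``quantitative step'' you flag is precisely Lemma~\ref{lem:estlampos} with $b=1/\lambda$ applied to $\mathcal Q\approx\mathcal J^2$: it gives $\bar{\mathcal P}_0(\mu)^{2\lambda}\sim C(1-\lambda\mu M_0)^{\lambda-2}$, which is non-integrable at $\eta^*$ exactly when $\lambda\leq1$, matching the complementary blowup range $\lambda>1$ in Theorem~\ref{thm:blowupak<0}(3).

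There is, however, a genuine gap in your treatment of cases (1)--(2). You deduce global smoothness from $\inf_x\mathcal Q(x,\eta)>0$ on every compact $\eta$-interval plus ``a standard continuation argument.'' This does not close, for two reasons. First, when $\mathcal Q$ never vanishes the change of variables compresses the entire half-line $\eta\in[0,\infty)$ into a \emph{finite} $t$-interval: since $c\geq c_{\min}>0$ under $\lambda\kappa<0$ with $\rho_0$ nonvanishing, one has $\mathcal Q\sim c(x)\mu^2$ and hence $\bar{\mathcal P}_0(\mu)^{2\lambda}\sim C\mu^{-2}$ as $\mu\to\infty$, so \eqref{eq:t} gives $t(\eta)\to t_\infty<+\infty$ as $\eta\to\infty$. (For the Hunter--Saxton values $(\lambda,\kappa)=(-\tfrac12,\tfrac12)$ this is explicit: $\bar{\mathcal P}_0=\bar c\,\eta^2+1$ with $\bar c=\int_0^1c\dd x$, so $\eta=\tan(\sqrt{\bar c}\,t)/\sqrt{\bar c}$ and $\eta\to\infty$ already at $t_\infty=\pi/(2\sqrt{\bar c})$.) Thus boundedness on compact $\eta$-intervals only yields smoothness on $[0,t_1]$ with $t_1<t_\infty$, and says nothing about $t\geq t_\infty$. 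Second, uniform positivity of $\mathcal Q$ does not by itself bound $u_x$: in \eqref{eq:ux} the prefactor $\bar{\mathcal P}_0^{-2\lambda}/(\lambda\eta)$ grows like $\eta$, and boundedness of $u_x\circ\gamma$ as $\eta\to\infty$ rests on a cancellation between the leading $O(\eta)$ contributions of the two terms inside the braces. What is actually required is a bound on $\norm{u_x}_\infty+\norm{\rho}_\infty$ uniform over the \emph{entire} range $\eta\in[0,\infty)$, obtained by carrying out this large-$\eta$ asymptotic expansion of \eqref{eq:ux} and \eqref{eq:rho}, followed by an appeal to the continuation criterion of the local well-posedness theory of~\cite{WW12} to conclude that the maximal existence time is infinite. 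Without these two ingredients your argument for cases (1)--(2) establishes smoothness only up to the finite time $t_\infty$.
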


\begin{thm}[finite-time blowup for $\lambda\kappa<0$] \label{thm:blowupak<0}
Suppose $\rho_0(x_i)=0$ at finitely many points $x_i\in[0,1]$, $1\leq i\leq n$, and $\lambda u_0'(x_i)>0$. Without loss of generality, let $\max_{\{x_i\}}u_0'(x)=u_0'(x_1)$. Then there exist smooth initial data $(u_0(x), \rho_0(x))$ satisfying~\eqref{eq:pbc} such that
\begin{enumerate}
\item for $(\lambda,\kappa)\in(-2,0)\times\mathbb{R}^+$, $u_x$ undergoes ``one-sided, discrete'' blowup. In particular, there exists a finite $t_*>0$ such that $u_x(\gamma(x_1,t),t)\to-\infty$ as $t\nearrow t_*$, but remains finite otherwise;
\item for $(\lambda,\kappa)\in(-\infty,-2]\times\mathbb{R}^+$, $u_x$ undergoes ``two-sided, everywhere'' blowup. In particular, there exists a finite $t_*>0$ such that $u_x(\gamma(x_1,t),t)\to-\infty$ as $t\nearrow t_*$, while $u_x(\gamma(x,t),t)\to+\infty$ otherwise;
\item for $(\lambda,\kappa)\in(1,+\infty)\times\mathbb{R}^-$, $u_x$ undergoes two-sided, everywhere blowup.
\end{enumerate}
\end{thm}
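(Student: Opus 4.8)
The plan is to extract every blowup statement directly from the representation formula~\eqref{eq:ux}, so that the whole problem reduces to locating the zeros of $\mathcal{Q}(x,t)$ in~\eqref{eq:Q} and measuring how fast they form. The algebraic heart of the matter is two elementary computations. First, at any zero $x_i$ of $\rho_0$ one has $c(x_i)=\lambda^2u_0'(x_i)^2$, so $\mathcal{Q}(x_i,t)=\bigl(1-\lambda\eta(t)u_0'(x_i)\bigr)^2$ is a perfect square that, because $\lambda u_0'(x_i)>0$, vanishes exactly when $\eta(t)=1/(\lambda u_0'(x_i))$. Second, viewing $\mathcal{Q}$ as a quadratic in $\eta$, its discriminant equals $4\lambda\kappa\,\rho_0(x)^2$, strictly negative whenever $\lambda\kappa<0$ and $\rho_0(x)\neq0$; since the same sign hypothesis forces the leading coefficient $c(x)$ to be positive, $\mathcal{Q}(x,t)>0$ for all $t$ at every point where $\rho_0$ does not vanish. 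Consequently loss of smoothness can only occur along characteristics issuing from the zeros of $\rho_0$, and the earliest such event happens at the label maximizing $\lambda u_0'$; for $\lambda>0$ the normalization $u_0'(x_1)=\max_{\{x_i\}}u_0'$ makes this $x_1$, while for $\lambda<0$ I would construct the data so that $x_1$ is the unique relevant zero.

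Next I would produce a finite blowup time. Using~\eqref{eq:t} and the fact that the integrand $\mathcal{Q}^{-1/(2\lambda)}$ is integrable near $x_1$ in each admissible parameter range, $\bar{\mathcal{P}}_0$ stays positive and finite up to $\eta_*:=1/(\lambda u_0'(x_1))$, whence $t_*:=t(\eta_*)<\infty$. The initial data would be chosen explicitly: smooth, periodic per~\eqref{eq:pbc}, with a single nondegenerate zero $x_1$ of $\rho_0$ at which $\mathcal{Q}(\cdot,\eta_*)$ vanishes to exactly second order. Writing $\eps=\eta_*-\eta\to0^+$ and $a=\lambda u_0'(x_1)>0$, this yields the local models $\mathcal{Q}(x_1+s,\eta)\sim a^2\eps^2+Bs^2$ and $\mathcal{J}(x_1+s,\eta)\sim a\eps+Ds$ near $x_1$, with $B\neq0$.

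The direction and type of blowup then follow from a dominant balance in~\eqref{eq:ux}. At $x_1$ the local quotient behaves like $\mathcal{J}/\mathcal{Q}\sim(a\eps)/(a\eps)^2\sim\eps^{-1}$, while the prefactor $\bar{\mathcal{P}}_0^{-2\lambda}/(\lambda\eta)$ carries the sign of $\lambda$; this alone gives $u_x(\gamma(x_1,t),t)\to-\infty$ in the cases $\lambda<0$. Whether other characteristics also blow up is governed by the singular part of the nonlocal integral $K:=\int_0^1\mathcal{J}\,\mathcal{Q}^{-1-1/(2\lambda)}\dd x$: rescaling $s=\eps\tau$ in its even piece produces a contribution of order $\eps^{\,2-2p}=\eps^{-1/\lambda}$, where $p=1+\tfrac1{2\lambda}$. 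For $\lambda>0$ this diverges and, being $x$-independent, drives $u_x(\gamma(x,t),t)\to-\infty$ along \emph{every} characteristic, while at $x_1$ the faster $\eps^{-1}$ spike (note $1/\lambda<1$ when $\lambda>1$) overwhelms it and flips the sign to $+\infty$; this reproduces the two-sided, everywhere blowup of case~(3), with $\lambda>1$ being exactly the integrability condition $\int|s|^{-1/\lambda}\dd s<\infty$ that keeps $\bar{\mathcal{P}}_0(\eta_*)$ finite and hence $t_*$ finite. For $\lambda<0$ both nonlocal quantities converge, the isolated $\eps^{-1}$ spike at $x_1$ dominates, and one obtains the one-sided picture; the borderline $\lambda=-2$ separating the discrete blowup of case~(1) from the everywhere blowup of case~(2) should emerge from a sharper comparison of the spike against the regular background in~\eqref{eq:ux}, together with the rate at which the Jacobian $\gamma_x=\bar{\mathcal{P}}_0^{-1}\mathcal{Q}^{-1/(2\lambda)}\sim|s|^{1/|\lambda|}$ collapses a whole neighborhood of $x_1$.

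The main obstacle is precisely this nonlocal asymptotic analysis. For $\lambda>0$ the integrand of $K$ has a non-absolutely-integrable singularity at $x_1$ (exponent $1-2p=-1-1/\lambda<-1$), so its leading odd part must be read as a principal value that cancels, leaving only the even, $\eps$-regularized remainder; extracting the exact power of $\eps$ and its sign---and thereby pinning down both thresholds $\lambda=1$ and $\lambda=-2$ together with the precise one-sided/two-sided and discrete/everywhere dichotomies---requires genuine dominant-balance estimates rather than the crude scaling sketched above. A secondary but necessary step is to verify that the explicitly built $(u_0,\rho_0)$ meets~\eqref{eq:pbc}, satisfies the nondegeneracy $\partial_x^2\mathcal{Q}(x_1,\eta_*)\neq0$, and isolates $x_1$ as the unique first blowup label.
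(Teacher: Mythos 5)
First, a point of orientation: the paper never proves Theorem~\ref{thm:blowupak<0}; it is recalled verbatim from~\cite{Sar15}, and the present paper only reuses its conclusions. The fair comparison is therefore with the machinery the paper sets up around it (the data class of Section~\ref{subsec:blowup} and Lemmas~\ref{lem:estlamneg}--\ref{lem:estlampos}), which mirrors the argument of~\cite{Sar15}. Your overall frame is the right one: for $\lambda\kappa<0$ the discriminant $4\lambda\kappa\rho_0^2$ is negative and $c>0$ wherever $\rho_0\neq0$, so $\mathcal Q$ can only vanish at zeros of $\rho_0$, the local term in~\eqref{eq:ux} gives $u_x(\gamma(x_1,t),t)\to-\infty$ for $\lambda<0$, and the one-sided/two-sided dichotomy is decided by whether $K=\int_0^1\mathcal J\,\mathcal Q^{-1-\frac{1}{2\lambda}}\dd x$ diverges.

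The genuine gap is your choice of local model for the data. You commit to a \emph{nondegenerate} (simple) zero of $\rho_0$ at $x_1$, which forces $\mathcal Q(x_1+s,\eta)\sim a^2\eps^2+Bs^2$ with $B$ carrying the $|\lambda\kappa|\eta^2\rho_0'(x_1)^2$ contribution. With that data the integrand of $K$ at $\eta=\eta_*$ is $O\bigl(|s|\cdot|s|^{-2+1/|\lambda|}\bigr)=O\bigl(|s|^{-1+1/|\lambda|}\bigr)$ near $x_1$, which is integrable for \emph{every} $\lambda<0$; hence $K$ stays bounded, the nonlocal term never diverges, and you can only produce one-sided discrete blowup. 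Case (2) ($\lambda\le-2$, two-sided everywhere) is unreachable from your data, and your heuristic exponent $\eps^{-1/\lambda}$ for $K$ does not reproduce the $\lambda=-2$ threshold. The construction that does work --- the one encoded in Section~\ref{subsec:blowup} and used throughout Section~\ref{sec:Lpregularity} --- takes $\rho_0$ vanishing to at least second order at $x_1$ (so that $\mathcal Q\approx\mathcal J^2$ in a whole neighborhood) together with $u_0''(x_1)=0$ and $u_0'''(x_1)\neq0$, giving $\mathcal J\sim a\eps+Cs^2$; the correct rescaling is then $s=\sqrt{\eps}\,\sigma$, yielding $K\sim\eps^{-\frac12-\frac1\lambda}$, i.e.\ Lemma~\ref{lem:estlamneg} with $b=1+\frac1\lambda$, whose divergence condition $b\geq\frac12$ is exactly $\lambda\le-2$. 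The same half-power shift undercuts your reasoning in case (3): with the correct data $\bar{\mathcal P}_0(\eta_*)=+\infty$ for $1<\lambda<2$, so your argument that $t_*<\infty$ because $\bar{\mathcal P}_0$ stays finite fails on part of that range (finiteness of $t_*$ comes instead from $\bar{\mathcal P}_0^{2\lambda}\sim\eps^{\lambda-2}$ being integrable precisely for $\lambda>1$), and at $x_1$ the local spike and $K/\bar{\mathcal P}_0$ are \emph{both} of order $\eps^{-1}$ for $1<\lambda<2$, so the sign there is a constant-level comparison rather than the power comparison you sketch. You candidly flag the dominant-balance analysis as unfinished, but the more basic problem is that the data you fix cannot realize the stated thresholds; since the theorem is an existence-of-data statement, the construction itself is part of the proof.
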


\begin{thm}[finite-time blowup for $\lambda\kappa>0$] \label{thm:blowupak>0}
There exist smooth initial data $(u_0(x), \rho_0(x))$ satisfying~\eqref{eq:pbc} and a finite time $t_*>0$ such that
\begin{enumerate}
\item for $(\lambda,\kappa)\in(-1,0)\times\mathbb{R}^-$, $u_x$ undergoes one-sided discrete blowup as $t\nearrow t_*$, whereas, for $(\lambda,\kappa)\in(-\infty,-1]\times\mathbb{R}^-$, $u_x$ develops a two-sided, everywhere singularity;
\item for $(\lambda,\kappa)\in\mathbb{R}^-\times\mathbb{R}^-$, $\rho$ undergoes one-sided, discrete blowup as $t\nearrow t_*$;
\item for $(\lambda,\kappa)\in\mathbb{R}^+\times\mathbb{R}^+$,  $u_x$ undergoes two-sided, everywhere blowup, while $\rho$ develops a one-sided, discrete singularity.
\end{enumerate}
\end{thm}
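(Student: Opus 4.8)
The plan is to reduce every assertion to the behavior of the quantity $\mathcal Q(x,t)$ from \eqref{eq:Q}, since the representation formulas \eqref{eq:ux} and \eqref{eq:rho}, together with the time change \eqref{eq:t}, show that $u_x$, $\rho$, and the blowup time are all governed by when and how $\mathcal Q$ first vanishes. Writing $A(x)=\lambda u_0'(x)$ and $B(x)=\sqrt{\lambda\kappa}\,\abs{\rho_0(x)}$ (legitimate since $\lambda\kappa>0$ in all three cases), a direct computation shows that the discriminant of the quadratic $\eta\mapsto\mathcal Q(x,\eta)$ equals $4\lambda\kappa\rho_0(x)^2\geq0$ and that $\mathcal Q$ factors as
\[
\mathcal Q(x,\eta)=\bigl(1-(A(x)+B(x))\eta\bigr)\bigl(1-(A(x)-B(x))\eta\bigr).
\]
Because $\mathcal Q(x,0)=1$ and $\eta$ increases from $0$ by \eqref{eq:etaivp}, the first vanishing occurs at $\eta^*=1/M$ with $M:=\max_x(A+B)$, attained at the maximizer $x_1$, provided $M>0$. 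I would therefore choose smooth periodic data $(u_0,\rho_0)$ for which $A+B$ has a strict, nondegenerate interior maximum at a single point $x_1$ with $M>0$ and $\rho_0(x_1)\neq0$; this is an open, nonempty set of data, so such profiles exist. With this choice $\mathcal Q(\cdot,\eta^*)$ vanishes only at $x_1$, to first order in $(\eta^*-\eta)$ and quadratically in $(x-x_1)$, and one checks $\mathcal J(x_1,\eta^*)=B(x_1)\eta^*>0$.

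Next I would show that the blowup occurs at a \emph{finite} time. From \eqref{eq:t} we have $t_*=t(\eta^*)=\int_0^{\eta^*}\bar{\mathcal P}_0(\mu)^{2\lambda}\dd\mu$, so finiteness is an integrability estimate for $\bar{\mathcal P}_0(\mu)^{2\lambda}$ as $\mu\nearrow\eta^*$. Using the local profile $\mathcal Q\sim a(\eta^*-\eta)+b(x-x_1)^2$ near $x_1$ (with $a,b>0$) and the rescaling $x-x_1=\sqrt{\eta^*-\eta}\,y$, one extracts the precise rate of $\bar{\mathcal P}_0(\eta)=\int_0^1\mathcal Q^{-1/(2\lambda)}\dd x$: for $\lambda<0$ the integrand stays bounded and $\bar{\mathcal P}_0(\eta^*)$ is finite, while for $\lambda>0$ one obtains a power (or, at $\lambda=1$, logarithmic) singularity that is nonetheless mild enough that $\bar{\mathcal P}_0(\mu)^{2\lambda}$ remains integrable up to $\eta^*$. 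In every case $t_*<\infty$.

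The classification then follows by inserting these asymptotics into \eqref{eq:rho} and \eqref{eq:ux}. For $\rho$, formula \eqref{eq:rho} reads $\rho(\gamma(x,t),t)=\rho_0(x)\mathcal Q(x,t)^{-1}\bar{\mathcal P}_0(t)^{-2\lambda}$; since $\bar{\mathcal P}_0^{-2\lambda}$ either stays finite (for $\lambda<0$) or vanishes at a controlled rate (for $\lambda>0$), the factor $\rho_0(x)/\mathcal Q$ forces a singularity \emph{only} at $x_1$, with fixed sign $\operatorname{sign}\rho_0(x_1)$, giving the one-sided, discrete blowup in cases (2) and (3). For $u_x$, the crucial structural point is that the integral in \eqref{eq:ux} is independent of the evaluation point, so
\[
u_x(\gamma(x,t),t)=\frac{\bar{\mathcal P}_0^{-2\lambda}}{\lambda\eta}\left\{\frac{\mathcal J(x,t)}{\mathcal Q(x,t)}-\frac{\mathcal I(t)}{\bar{\mathcal P}_0(t)}\right\},\qquad \mathcal I(t)=\int_0^1\frac{\mathcal J}{\mathcal Q^{1+\frac{1}{2\lambda}}}\dd x.
\]
Whether the common term $\mathcal I/\bar{\mathcal P}_0$ diverges is controlled by the convergence of $\int (x-x_1)^{-2(1+1/(2\lambda))}\dd x$ near $x_1$, i.e.\ by the sign of $1+\lambda^{-1}$: it converges precisely for $\lambda\in(-1,0)$ and diverges for $\lambda\leq-1$ and for $\lambda>0$. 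When it converges, only the local term $\mathcal J/\mathcal Q$ at $x_1$ is singular, giving one-sided discrete blowup of $u_x$ with sign $\operatorname{sign}(1/\lambda)$, which is case (1) for $\lambda\in(-1,0)$. When it diverges, at $x_1$ the local term dominates the common term while at every other $x$ only the common term survives; tracking the opposite signs of these two regimes together with the factor $1/\lambda$ yields the two-sided, everywhere blowup claimed for $\lambda\leq-1$ in case (1) and for $\lambda>0$ in case (3).

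The hard part will be this last step: in the divergent regime both $\mathcal J/\mathcal Q$ at $x_1$ and the common term $\mathcal I/\bar{\mathcal P}_0$ blow up, so one must compare their exact rates to confirm that the local term wins at $x_1$ (fixing the sign there) while the common term wins at every other point (fixing the opposite sign). This requires sharp asymptotics of the singular integrals $\mathcal I(\eta)$ and $\bar{\mathcal P}_0(\eta)$ as $\eta\nearrow\eta^*$, obtained from the Laplace-type rescaling above, together with careful handling of the borderline exponent $\lambda=-1$ (where the divergence is only logarithmic) and of the sub/supercritical split $\lambda\lessgtr1$ for $\lambda>0$, which alters the rate of $\bar{\mathcal P}_0$ but not the qualitative conclusion.
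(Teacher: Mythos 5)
First, a framing remark: the paper does not actually prove Theorem~\ref{thm:blowupak>0} --- it is quoted from~\cite{Sar15} in Section~\ref{subsec:prev} --- so the comparison is against the machinery the paper does set up, namely the factorization~\eqref{eq:Q3}, the choice of $n$, $N$ and $\eta_*$ in Section~\ref{subsec:blowup}, and the Laplace-type asymptotics of Lemmas~\ref{lem:estlamneg}--\ref{lem:estlampos}. Your outline is exactly this machinery: your $M=\max_x(A+B)$ coincides with $\lambda N$ for $\lambda>0$ and with $\lambda n$ for $\lambda<0$, your rescaling $x-x_1=\sqrt{\eta^*-\eta}\,y$ is the content of the lemmas with $b$ ranging over $1+\frac{1}{2\lambda}$, $\frac{1}{2\lambda}$, etc., and your treatment of $t_*<\infty$, of $\rho$ via~\eqref{eq:rho}, and of cases (1), (2) and the $\lambda\geq 1$ part of (3) is correct and matches the estimates the paper itself derives in the proof of Theorem~\ref{thm:ak>0}.

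There is, however, one genuine gap, in case (3) for $\lambda\in(0,1)$. There the two competing terms in~\eqref{eq:ux} blow up at the \emph{same} rate, so your strategy of ``comparing the exact rates to confirm that the local term wins at $x_1$'' cannot close the argument. Concretely, $\mathcal Q(x_1,t)=(1-\lambda\eta N)\,(1-\lambda\eta g_-(x_1))$ with the second factor tending to a positive constant, so the local term satisfies $\mathcal J(x_1,t)/\mathcal Q(x_1,t)\sim C\,(1-\lambda\eta N)^{-1}$; meanwhile Lemma~\ref{lem:estlampos} with $b=1+\frac{1}{2\lambda}$ and $b=\frac{1}{2\lambda}$ gives $\mathcal I(t)\sim C\,(1-\lambda\eta N)^{-\frac12-\frac{1}{2\lambda}}$ and $\bar{\mathcal P}_0(t)\sim C\,(1-\lambda\eta N)^{\frac12-\frac{1}{2\lambda}}$ for $\lambda\in(0,1)$, whose ratio is again $\Theta\bigl((1-\lambda\eta N)^{-1}\bigr)$. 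Deciding the sign (or even the divergence) of $u_x(\gamma(x_1,t),t)$ therefore requires comparing the two leading \emph{constants}, not the exponents, and a priori their difference could vanish or take either sign. That this is not a removable technicality is illustrated by Theorem~\ref{thm:global}(3): in the closely analogous double-root configuration with $(\lambda,\kappa)\in(0,1]\times\R^-$ the same competition resolves by cancellation and the solution stays globally smooth, whereas Theorem~\ref{thm:blowupak<0}(3) only claims blowup for $\lambda>1$. So for $\lambda\in(0,1)$ you must either compute the limiting constant of $\mathcal Q(x_1,t)\bigl(\mathcal J/\mathcal Q-\mathcal I/\bar{\mathcal P}_0\bigr)$ explicitly and show it is nonzero, or find a structurally different argument (e.g.\ exploiting the zero spatial mean of $u_x$); your proposal identifies the regime as delicate but asserts the wrong mechanism for resolving it.
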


\smallskip
\subsection{Summary of results} \label{subsec:summary}
Theorems~\ref{thm:global}--\ref{thm:blowupak>0} describe the $L^{\infty}([0,1])$ regularity of $u_x$ and $\rho$ for a large class of smooth, periodic initial conditions. The main purpose of this paper is to extend these results to $L^p([0,1])$ spaces for $p \in [1,\infty)$. We do this via a direct approach which involves using the representation formulae introduced in Section~\ref{subsec:prev} to compute the $L^p$ norm of the solution and then applying some standard $L^p$ space inequalities to the resulting expressions. 

In summary, we prove the following: 

\begin{thm}[$L^p$ regularity for $\lambda\kappa < 0$] \label{thm:ak<0}
Let $t_*>0$ denote the finite, $L^\infty$ blowup time in Theorem \ref{thm:blowupak<0} for $\lambda\kappa < 0$. There  exists smooth initial data $(u_0,\rho_0)$ satisfying~\eqref{eq:pbc} such that 
\begin{enumerate}
\item for $p \in (1,\infty)$ and $\lambda \in (-\infty,-2/p] \cup (1,\infty)$, $\lim_{t\nearrow t_*} \norm{u_x}_p = +\infty$.
\item for $p \in [1,\infty)$ and $\lambda \in (-2/(2p-1),0)$, $\lim_{t\nearrow t_*} \norm{u_x}_p < \infty$.
\end{enumerate}
\end{thm}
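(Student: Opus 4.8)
The plan is to evaluate $\norm{u_x}_p^p$ directly from the representation \eqref{eq:ux} by pushing the physical integral forward to the Lagrangian label. Since $\gamma(\cdot,t)$ is an orientation-preserving diffeomorphism of the period with Jacobian \eqref{eq:gammax}, the change of variables $y=\gamma(x,t)$ gives
\begin{equation*}
\norm{u_x(\cdot,t)}_p^p=\int_0^1\abs{u_x(\gamma(x,t),t)}^p\,\gamma_x(x,t)\dd x=\abs{\frac{\bar{\mathcal P}_0^{-2\lambda}}{\lambda\eta}}^p\frac1{\bar{\mathcal P}_0}\int_0^1\abs{\frac{\mathcal J}{\mathcal Q}-M(t)}^p\mathcal Q^{-\frac1{2\lambda}}\dd x,
\end{equation*}
where, writing $\dd\nu=\gamma_x\dd x$ (a probability measure, since $\int_0^1\gamma_x\dd x=1$), the nonlocal term is exactly the $\nu$-mean $M(t)=\int_0^1(\mathcal J/\mathcal Q)\dd\nu$. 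Thus $\norm{u_x}_p=\abs{K(t)}\,\norm{\mathcal J/\mathcal Q-M(t)}_{L^p(\dd\nu)}$ with $K(t)=\bar{\mathcal P}_0^{-2\lambda}/(\lambda\eta)$, and Minkowski's inequality together with $\abs{M(t)}\le\norm{\mathcal J/\mathcal Q}_{L^1(\dd\nu)}\le\norm{\mathcal J/\mathcal Q}_{L^p(\dd\nu)}$ reduces the whole problem to understanding the single integral $\int_0^1\abs{\mathcal J}^p\mathcal Q^{-p-\frac1{2\lambda}}\dd x$ as $t\nearrow t_*$.

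Next I would isolate the feature that distinguishes $\lambda\kappa<0$: expanding \eqref{eq:Q} and comparing with $\mathcal J$ from \eqref{eq:P0} yields the identity
\begin{equation*}
\mathcal Q(x,t)=\mathcal J(x,t)^2-\lambda\kappa\,\eta(t)^2\rho_0(x)^2=\mathcal J(x,t)^2+\abs{\lambda\kappa}\,\eta(t)^2\rho_0(x)^2,
\end{equation*}
so that $\mathcal Q>0$ everywhere except at the Lagrangian point $x_1$ and the value $\eta_*=1/(\lambda u_0'(x_1))$, where $\mathcal J$ and $\rho_0$ vanish simultaneously. Specializing the construction of Theorem~\ref{thm:blowupak<0} to data with $u_0''(x_1)=0$ and $\rho_0$ vanishing to second order at $x_1$ (which is what the stated thresholds require), both $\mathcal J(\cdot,\eta_*)$ and $\rho_0$ vanish quadratically, so $\mathcal J\sim(x-x_1)^2$ and $\mathcal Q\sim(x-x_1)^4$ at $\eta=\eta_*$. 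Setting $s=\eta_*-\eta$ and rescaling $x-x_1=\sqrt s\,\zeta$ makes both terms of the identity of order $s^2$, giving the self-similar form $\mathcal J=s\,J(\zeta)$, $\mathcal Q=s^2R(\zeta)$ with $R\ge R_{\min}>0$; substituting,
\begin{equation*}
\int_{\abs{x-x_1}<\delta}\abs{\mathcal J}^p\mathcal Q^{-p-\frac1{2\lambda}}\dd x=s^{\,\frac12-p-\frac1\lambda}\int\abs{J(\zeta)}^pR(\zeta)^{-p-\frac1{2\lambda}}\dd\zeta\,\bigl(1+o(1)\bigr),
\end{equation*}
so the power of $s$ is $\tfrac12-p-\tfrac1\lambda$ and the transition sits precisely at $p+\tfrac1\lambda=\tfrac12$, i.e.\ $\lambda=-2/(2p-1)$.

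For the boundedness statement (2), with $\lambda\in(-2/(2p-1),0)\subset(-2,0)$, I would upgrade this scaling to a uniform-in-$t$ bound. For $\lambda>-2$ the global factors $K(t)$, $\bar{\mathcal P}_0(t)$, $M(t)$ all converge to finite limits as $t\nearrow t_*$ (the $M$-integrand is $O(\abs{x-x_1}^{-2-2/\lambda})$, integrable precisely when $\lambda>-2$), so it remains to dominate $\abs{\mathcal J}^p\mathcal Q^{-p-1/(2\lambda)}$ near $x_1$ by a fixed integrable function. The identity permits the weighted interpolation $\mathcal Q\gtrsim\abs{\mathcal J}^{2\theta}\abs{\rho_0}^{2(1-\theta)}$; choosing $\theta$ to cancel the $\mathcal J$-dependence produces an $s$-independent majorant comparable to $\abs{x-x_1}^{-2p-2/\lambda}$, which is integrable exactly when $p+1/\lambda<1/2$, and dominated convergence gives $\lim_{t\nearrow t_*}\norm{u_x}_p<\infty$. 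For the blowup statement (1) with $\lambda<0$ I would run the estimate in reverse: when $\lambda\le-2$ the mean $M(t)$ itself diverges, forcing $\norm{u_x}_p\gtrsim\abs{K(t)M(t)}\to\infty$ for every $p$, while for $\lambda\in(-2,-2/p]$ the mean stays bounded and on the core $\abs{x-x_1}\le c\sqrt s$ the singular term dominates, so $\abs{\mathcal J/\mathcal Q-M}\ge\tfrac12\abs{\mathcal J}/\mathcal Q$ and the integral is bounded below by a multiple of $s^{1/2-p-1/\lambda}\to+\infty$. The clean hypothesis $\lambda\le-2/p$ is exactly what keeps $p+1/\lambda\ge p/2$ bounded away from the critical value $1/2$ (which also explains the exclusion of $p=1$, where this margin collapses), making the lower bound robust. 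The remaining range $\lambda>1$ (with $\kappa<0$) is of a different nature: there $-1/(2\lambda)<0$, so $\gamma_x$ and the nonlocal quantities $\bar{\mathcal P}_0,M$ themselves blow up as $\mathcal Q\to0$, and again $\norm{u_x}_p\gtrsim\abs{K(t)M(t)}\to\infty$ for every $p$.

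The main obstacle, I expect, is not the power counting but the honest treatment of the nonlocal mean $M(t)$ coupled with the non-uniform concentration at $x_1$. In the boundedness regime one must exhibit a single integrable majorant valid for all small $s$, even though the integrand develops an inner layer on the scale $\abs{x-x_1}\sim s$ where $\mathcal J/\mathcal Q\sim1/s$; the self-similar substitution tames this, but converting it into a rigorous dominated-convergence argument (rather than a leading-order asymptotic) is the delicate point. In the blowup regime it is precisely the cancellation between $\mathcal J/\mathcal Q$ and its mean $M$ that must be controlled, and retaining enough sign information through the triangle inequality — without which one cannot push the lower bound down to the natural threshold $-2/(2p-1)$ — is where the real care is required.
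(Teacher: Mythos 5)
Your framework is genuinely different from the paper's: you work in $L^p(\dd\nu)$ for the pushforward probability measure $\dd\nu=\gamma_x\dd x$, so that the nonlocal term becomes the exact $\nu$-mean $M(t)$ of $f=\mathcal J/\mathcal Q$, and you extract the rates from the identity $\mathcal Q=\mathcal J^2-\lambda\kappa\eta^2\rho_0^2$ via the self-similar rescaling $x-x_1=\sqrt{s}\,\zeta$, rather than from the paper's Taylor/hypergeometric integral lemmas. The exponent bookkeeping is right, the critical value $-2/(2p-1)$ emerges correctly, the AM--GM majorant plus dominated convergence for part (2) is a clean substitute for the paper's asymptotic matching (modulo a small case split when $p+\tfrac{1}{2\lambda}\le 0$, where the integrand is simply bounded), and the core-domination lower bound for $\lambda\in(-2,-2/p]$ is valid --- indeed it would prove blowup on all of $(-\infty,-2/(2p-1))$, which is more than the theorem claims. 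Making the second-order vanishing of $\rho_0$ at $x_1$ explicit is also a point in your favor; the paper leaves that implicit when it replaces $\mathcal Q$ by $(1-\lambda\eta u_0')^2$ near the blowup point.

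The gap is in the cases where you invoke $\norm{u_x}_p\gtrsim\abs{K(t)M(t)}$ because ``$M$ diverges,'' i.e.\ $\lambda\le-2$ and all of $\lambda>1$. No triangle inequality gives this: since $M$ is precisely the $\nu$-mean of $f$, one always has $\abs{M}\le\norm{f}_{L^1(\dd\nu)}\le\norm{f}_{L^p(\dd\nu)}$, so the reverse triangle inequality yields $\norm{f-M}_{L^p(\dd\nu)}\ge\abs{M}-\norm{f}_{L^p(\dd\nu)}\le0$, and Jensen in $L^p(\dd\nu)$ yields exactly $0$. You must instead show that $f\ll M$ on a set of $\nu$-measure bounded below. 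For $\lambda\le-2$ this is one extra sentence ($\bar{\mathcal P}_0$ and $f$ stay bounded away from $x_1$, and $\nu$ does not concentrate), but for $1<\lambda\le2$ it is a real obstruction: there $\bar{\mathcal P}_0\to\infty$, the measure $\nu$ concentrates at scale $\sqrt{\eta_*-\eta}$ around $\bar x$, and on that scale $f$ and $M$ are both of order $(1-\lambda\eta M_0)^{-1}$, so the cancellation you flag at the end actually bites; one has to work on an annulus $\abs{x-\bar x}\ge R_0\sqrt{\eta_*-\eta}$ with $R_0$ large to recover $\abs{f-M}\ge M/2$ on a set of fixed $\nu$-mass. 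The paper sidesteps all of this by applying Jensen with the \emph{unweighted} Lebesgue measure in the Lagrangian label (its inequality \eqref{eq:uxlower}): the factor $\gamma_x^{1/p}\propto\mathcal Q^{-1/(2\lambda p)}$ then weights the two competing terms differently, turning the lower bound into a difference of two explicit integrals with distinct blowup rates, and for $1<\lambda<2$ it additionally passes to $\norm{u_x}_p\ge\norm{u_x}_{p'}$ for a carefully chosen $p'$. Until you supply a comparable device in the $M\to\infty$ regimes, part (1) is not established for $\lambda\le-2$ or $\lambda>1$.
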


\begin{thm}[$L^p$ regularity for $\lambda\kappa > 0$] \label{thm:ak>0}
Let $t_*>0$ denote the finite, $L^\infty$ blowup time in Theorem \ref{thm:blowupak>0} for $\lambda\kappa > 0$. There exists smooth initial data $(u_0,\rho_0)$ satisfying~\eqref{eq:pbc} such that 
\begin{enumerate}
\item for $p \in (1,\infty)$ and $\lambda \in (-\infty,-1/p] \cup (0,\infty)$, $\lim_{t\nearrow t_*} \norm{u_x}_p = +\infty$.
\item for $p \in [1,\infty)$ and $\lambda \in (-1/(2p-1),0)$, $\lim_{t\nearrow t_*} \norm{u_x}_p < \infty$.
\item for $p \in [1,\infty)$ and $\lambda \in (-\infty,-1/(2p-1)] \cup (0,\infty)$, $\lim_{t\nearrow t_*} \norm{\rho}_p = +\infty$.
\item for $p \in [1,\infty)$ and $\lambda \in (-1/(2p-1),0)$, $\lim_{t\nearrow t_*} \norm{\rho}_p <\infty$.
\end{enumerate}
\end{thm}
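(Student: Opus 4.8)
The plan is to pull both norms back to the fixed interval $[0,1]$ along the characteristics, rewrite each as an explicit family of integrals of powers of $\mathcal Q(x,t)$, and then read off the thresholds from a Laplace-type analysis of how $\mathcal Q$ degenerates at the blowup point as $t\nearrow t_*$. First I would change variables by $y=\gamma(x,t)$. For $t<t_*$ the map $\gamma(\cdot,t)$ is an increasing diffeomorphism of $[0,1]$ with Jacobian $\gamma_x=\bar{\mathcal P}_0(t)^{-1}\mathcal Q(x,t)^{-1/(2\lambda)}>0$ by~\eqref{eq:gammax}, so
$$\norm{u_x}_p^p=\int_0^1\abs{u_x(\gamma(x,t),t)}^p\gamma_x\dd x,\qquad\norm{\rho}_p^p=\int_0^1\abs{\rho(\gamma(x,t),t)}^p\gamma_x\dd x.$$
Substituting~\eqref{eq:ux}--\eqref{eq:rho} collapses the $\rho$ norm to
$$\norm{\rho}_p^p=\bar{\mathcal P}_0(t)^{-2\lambda p-1}\int_0^1\frac{\abs{\rho_0(x)}^p}{\mathcal Q(x,t)^{p+\frac1{2\lambda}}}\dd x,$$
and the $u_x$ norm to
$$\norm{u_x}_p^p=\frac{\bar{\mathcal P}_0(t)^{-2\lambda p-1}}{\abs{\lambda}^p\eta(t)^p}\int_0^1\abs{\frac{\mathcal J(x,t)}{\mathcal Q(x,t)}-K(t)}^p\mathcal Q(x,t)^{-\frac1{2\lambda}}\dd x,$$
where $K(t)=\bar{\mathcal P}_0^{-1}\int_0^1\mathcal J\,\mathcal Q^{-1-\frac1{2\lambda}}\dd x$ is independent of $x$. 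Thus every assertion reduces to the $t\nearrow t_*$ behaviour of weighted integrals $\int_0^1\mathcal Q^{-\alpha}\dd x$.

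Second, I would pin down the local structure of $\mathcal Q$ at blowup. Viewed as a quadratic in $\eta$, its discriminant is $4\lambda\kappa\rho_0(x)^2$; this is exactly where the hypothesis $\lambda\kappa>0$ enters, since it forces $\mathcal Q(x,\cdot)$ to have a simple real root at some $x_1$ with $\rho_0(x_1)\neq0$, so that $m(t):=\min_x\mathcal Q(x,t)=\mathcal Q(x_1,t)\searrow0$ like $(\eta_*-\eta)$ with a nondegenerate spatial minimum $\mathcal Q\approx m(t)+B(x-x_1)^2$. A simple root forces $\mathcal J(x_1,t_*)\neq0$ (otherwise $\rho_0(x_1)=0$), so $\mathcal J(x_1,t)$ and $\rho_0(x_1)$ stay bounded away from $0$ there and both $\abs{u_x}$ and $\abs{\rho}$ blow up like $\mathcal Q^{-1}$; a Watson/Laplace estimate then gives $\int_0^1\mathcal Q^{-\alpha}\dd x\asymp m^{\frac12-\alpha}$ for $\alpha>\tfrac12$ and a finite limit for $\alpha<\tfrac12$. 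For $\rho$ the relevant exponent is $\alpha=p+\frac1{2\lambda}$, whose threshold $\alpha=\tfrac12$ is precisely $\lambda=-\frac1{2p-1}$.

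Third, I would compute the prefactors. For $\lambda<0$ the weight $\mathcal Q^{-1/(2\lambda)}=\mathcal Q^{1/(2\abs{\lambda})}$ is bounded, so $\bar{\mathcal P}_0(t)$ and $\eta(t)$ converge to finite positive limits and the prefactors are inessential: the thresholds come entirely from the spatial integral, giving~(3)--(4) for $\rho$ and, from the same $\mathcal Q^{-1}$ behaviour, boundedness of $\norm{u_x}_p$ exactly when $p+\frac1{2\lambda}<\tfrac12$, i.e. $\lambda\in(-\frac1{2p-1},0)$, which is~(2). For $\lambda>0$ one must instead track the rate $\bar{\mathcal P}_0\asymp m^{\frac12-\frac1{2\lambda}}$; the product $\bar{\mathcal P}_0^{-2\lambda p-1}\int_0^1\mathcal Q^{-(p+1/(2\lambda))}\dd x$ then reduces to a strictly negative power of $m$ (equal to $-\lambda p$ when $0<\lambda<1$), yielding blowup of both $\norm{\rho}_p$ and $\norm{u_x}_p$ for every $\lambda>0$.

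The main obstacle is the blowup direction for $u_x$, i.e. part~(1), because of the nonlocal, $x$-independent term $K(t)$ in the bracket. Near $x_1$ the spike $\mathcal J/\mathcal Q$ and the constant $K(t)$ may be of the same order, so a reverse-triangle lower bound on $\norm{\mathcal J/\mathcal Q-K}_{L^p(\gamma_x\dd x)}$ is lossy; isolating a clean sublevel neighborhood of $x_1$ on which the spike dominates yields blowup only on the smaller range $\lambda\le-\frac1p$, which is why~(1) falls short of the sharp boundedness threshold in~(2). I expect the remaining work to be routine: the endpoint values of $\lambda$ and the case $p=1$ require the borderline logarithmic Laplace asymptotics ($\alpha=\tfrac12$), while the boundedness claims~(2) and~(4) follow from dominated convergence once $\alpha<\tfrac12$, since then $\int_0^1\abs{\cdots}^p\mathcal Q(x,t_*)^{-1/(2\lambda)}\dd x<\infty$.
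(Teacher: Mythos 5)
Your strategy coincides with the paper's: pull the norms back along $\gamma$ using the Jacobian \eqref{eq:gammax}, reduce everything to integrals of the form $\int_0^1 \mathcal Q^{-\alpha}\dd x$, and evaluate these by the Laplace-type expansion $\mathcal Q\approx m(t)+B(x-x_1)^2$ near the simple root of $\mathcal Q$ (this is precisely the content of Lemmas~\ref{lem:estlamneg}--\ref{lem:estlampos}). Your exponent bookkeeping for $\rho$ in parts (3)--(4), the boundedness claim (2), the $\lambda<0$ half of part (1), and the observation that the nonlocal term $K(t)$ is what limits (1) to $\lambda\le-1/p$ and leaves the window $(-1/p,-1/(2p-1))$ undecided, all agree with the paper; the paper implements the lower bound through Jensen's inequality \eqref{eq:uxlower} applied globally rather than your sublevel-set localization, but for $\lambda<0$ (where $\bar{\mathcal P}_0$ and $\eta$ stay bounded) the two are equivalent.

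The genuine gap is part (1) for $\lambda\in(0,1]$. You assert that tracking $\bar{\mathcal P}_0\asymp m^{\frac12-\frac1{2\lambda}}$ "yields blowup of both $\norm{\rho}_p$ and $\norm{u_x}_p$ for every $\lambda>0$," but the computation you supply is the one for $\norm{\rho}_p^p$; for $u_x$ the obstacle you yourself flagged is worst exactly here. When $0<\lambda<1$ one finds $K(t)\asymp m^{-1}$, the same order as the peak value of $\mathcal J/\mathcal Q$, and for $\lambda\le 1/p$ both terms of the Jensen lower bound share the leading order $m^{-\frac12-\frac1{2\lambda p}}$, so neither a "spike dominates" localization nor a comparison of rates decides whether the difference survives. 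The paper's device is to interpolate downward in the exponent: since $[0,1]$ has unit measure, $\norm{u_x}_p\ge\norm{u_x}_{p'}$ for $p'\le p$, and one chooses $1<p'<3$ with $\frac{p'-1}{2p'}<\lambda<\frac1{p'}$ so that the resulting rate $(1-\lambda\eta N)^{-(\lambda+\frac1{2p'}-\frac12)}$ carries a strictly positive exponent; the borderline $\lambda=1$ requires a separate logarithmic computation. Without this reduction (or some substitute that rules out cancellation between $\mathcal J/\mathcal Q$ and $K(t)$ on the range $0<\lambda\le 1/p$), your plan does not establish (1) on $(0,\infty)$.
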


Our results establish a connection between the $L^p([0,1])$ regularity of solutions and qualitative properties of $L^{\infty}([0,1])$ blowup: two-sided everywhere blowup of $u_x$ corresponds to its $L^1([0,1])$ norm escaping to infinity at the $L^{\infty}([0,1])$ blowup time, whereas, one-sided discrete blowup of $u_x$ corresponds to its containment, up to the $L^{\infty}([0,1])$ blowup time, in some $L^p([0,1])$ spaces for $p$ finite.

We also remark that our results are not exclusive to periodic solutions; they also hold, with the obvious modifications, under Dirichlet boundary conditions
\[ u(0,t)=u(1,t)=\rho(0,t)=\rho(1,t)=0. \]

To further investigate the role that the boundary conditions may play in the long-time behavior of solutions to~\eqref{eq:ghs}, we end the paper by considering the associated system
\begin{equation} \label{eq:ghs2}
\begin{cases}
u_{xxt}+uu_{xxx}+(1-2\lambda)u_xu_{xx}-2\kappa\rho\rho_x=0, & x\in[0,1],\quad t > 0, \\
\rho_t + u\rho_x = 2\lambda \rho u_x, & x\in[0,1],\quad t > 0, \\
u(x,0) = u_0(x), \quad \rho(x,0) = \rho_0(x), & x \in [0,1],
\end{cases}
\end{equation}
obtained by differentiating~\eqref{eq:ghs}i) with respect to $x$. Replacing~\eqref{eq:pbc} with the homogeneous, three-point boundary condition~\eqref{eq:ghs_3p}, we establish the following result for a local-in-time solution of~\eqref{eq:ghs2}.

\begin{thm} \label{thm:threept}
Suppose there exists $T>0$ such that $(u,\rho)$ solves~\eqref{eq:ghs2} with the homogeneous three-point boundary condition~\eqref{eq:ghs_3p} for all $0<t\leq T$. If $(\lambda,\kappa) \in (-1,+\infty) \times[0,+\infty)$, or $(\lambda,\kappa) \in (-\infty,-1) \times(-\infty,0]$, there exist smooth initial data $(u_0,\rho_0)$ such that $T<+\infty$. In particular, $|u(0,t)|\to+\infty$ as $t\nearrow T$.
\end{thm}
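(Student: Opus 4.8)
The plan is to collapse the third-order equation \eqref{eq:ghs2}i) into a scalar differential inequality for the single boundary value $a(t) := u(0,t)$ and then produce finite-time Riccati blowup. First I would integrate \eqref{eq:ghs2}i) once in $x$ over $[0,x]$. Integrating the terms $uu_{xxx}$ and $u_xu_{xx}$ by parts, the $u_x^2$ contributions combine with net coefficient $-\lambda$, and one recovers the undifferentiated momentum balance
\begin{equation*}
u_{xt} + u u_{xx} - \lambda u_x^2 - \kappa \rho^2 = C(t),
\end{equation*}
in which the left-hand side is forced to be independent of $x$. This mirrors \eqref{eq:ghs}i), with the nonlocal $I(t)$ of \eqref{eq:nonlocal} replaced by a function $C(t)$ now fixed by \eqref{eq:ghs_3p} rather than by a spatial average. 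Using the boundary conditions I would then identify one endpoint as a persistent stagnation point of the flow; evaluating the balance there and invoking $\rho_t+u\rho_x=2\lambda\rho u_x$ (so that $\rho$ is frozen along the stationary characteristic) shows that $C(t)\equiv C_0$ is a constant determined by the frozen endpoint value of $\rho$.

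The decisive step is to turn this into an autonomous inequality. Differentiating $a(t)=u(0,t)$ and using $u_t\equiv 0$ at the stagnation endpoint gives $a'(t)=-\int_0^1 u_{xt}\dd x$; substituting the momentum balance and integrating $\int_0^1 uu_{xx}\dd x$ by parts—with every boundary term killed by \eqref{eq:ghs_3p}—yields the identity
\begin{equation*}
a'(t) = C_0 - (1+\lambda)\int_0^1 u_x^2 \dd x - \kappa \int_0^1 \rho^2 \dd x ,
\end{equation*}
where $C_0$ is the (small, sign-definite) constant coming from the frozen endpoint value of $\rho$. Since $\int_0^1 u_x\dd x$ equals the difference of the endpoint values of $u$ and one endpoint value vanishes, Cauchy--Schwarz furnishes the coercive bound $\int_0^1 u_x^2\dd x \ge a(t)^2$.

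On the region $(-1,\infty)\times[0,\infty)$ one has $1+\lambda>0$ and $\kappa\ge 0$, so both integral terms drive $a$ downward and $a'(t)\le C_0-(1+\lambda)\,a(t)^2$; on $(-\infty,-1)\times(-\infty,0]$ the signs reverse and $a'(t)\ge |1+\lambda|\,a(t)^2-|C_0|$. In either case the coupling term $\kappa\int_0^1\rho^2\dd x$ has the sign that \emph{assists} blowup, so it can be discarded by positivity and no delicate control of $\rho$ is needed—exactly why the two admissible parameter regions are characterized by matching signs of $1+\lambda$ and $\kappa$. Finally I would choose smooth initial data with $|u_0(0)|$ large (negative in the first region, positive in the second) and with $\rho_0$ small at the stagnation endpoint so that $|1+\lambda|\,a(0)^2$ dominates $|C_0|$; the comparison ODE $y'=\pm(|1+\lambda|\,y^2-|C_0|)$ then blows up at some finite $T$, and the comparison principle forces $|u(0,t)|\to\infty$ as $t\nearrow T$, contradicting persistence of the smooth solution and giving $T<+\infty$.

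The main obstacle I anticipate is bookkeeping rather than hard analysis: one must verify from the precise form of \eqref{eq:ghs_3p} that every boundary term generated by the two integrations by parts cancels, that the designated endpoint is genuinely a stagnation point along which $\rho$ stays frozen, and hence that $C(t)$ is truly constant (bounded control of $C(t)$ would in fact suffice). Once the displayed identity for $a'(t)$ is secured, the Cauchy--Schwarz reduction and the Riccati comparison are routine.
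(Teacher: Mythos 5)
Your proposal is correct and is essentially the paper's argument: both reduce the problem to the identity $\frac{\mathrm{d}}{\mathrm{d}t}\bigl(-u(0,t)\bigr)=(1+\lambda)\norm{u_x}_2^2+\kappa\norm{\rho}_2^2$ (the paper gets it by multiplying~\eqref{eq:ghs3}i) by $x$ and integrating by parts, you by integrating twice in $x$ and fixing the integration ``constant'' $C(t)$ at the stagnation endpoint $x=1$, where~\eqref{eq:ghs_3p} in fact forces $C(t)\equiv0$ with no need for the dynamics of $\rho$), and then close with the same Cauchy--Schwarz bound $u(0,t)^2\le\norm{u_x}_2^2$ and Riccati comparison.
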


\smallskip
\subsection{Outline.} \label{subsec:outline}
The paper is organized as follows. In Section~\ref{sec:prelim}, we introduce the class of smooth initial data, derive bounds for the $L^p$ norm of $u_x$, and isolate two lemmas useful in approximating the asymptotic behavior of some of the integrals appearing in the norm expressions. These integral estimates are then used in Section~\ref{sec:Lpregularity} to prove Theorems~\ref{thm:ak<0} and \ref{thm:ak>0}. Lastly, Theorem~\ref{thm:threept} is established in Section~\ref{sec:threept}. For the convenience of the reader, we include an outline of the derivation~\cite{Sar15} of the representation formulae and some of the integral estimates used to prove the main theorems in Appendices~\ref{app:soln} and~\ref{app:int}, respectively.


\section{Preliminaries} \label{sec:prelim}

\subsection{The initial data} \label{subsec:blowup}

From the solution formulae~\eqref{eq:ux} and~\eqref{eq:rho}, we see that solutions potentially become infinite at the smallest positive $\eta$-value, say $\eta_*$, for which $\mathcal Q(x,t)$ vanishes for some $x \in [0,1]$. To this end, define sets
\begin{equation} \label{eq:omega}
\Omega = \{x \in [0,1] \mid c(x) = 0 \}
\end{equation}
and
\begin{equation} \label{eq:sigma}
\Sigma = \{x \in [0,1] \mid c(x) \neq 0 \}.
\end{equation}
Then $\mathcal Q$ admits three possible representations for each $x \in [0,1]$:
\begin{enumerate}
\item if $x \in \Omega$, then $\mathcal Q$ is linear in $\eta$:
\begin{equation} \label{eq:Q1}
\mathcal Q(x,t) = 1 - 2\lambda u_0'(x)\eta(t),
\end{equation}
\item if $x \in \Sigma$ and $\rho_0(x) = 0$, then $\mathcal Q$ is quadratic in $\eta$ and has a double root:
\begin{equation} \label{eq:Q2}
\mathcal Q(x,t) = (1 - \lambda\eta(t) u_0'(x))^2 = \mathcal J(x,t)^2,
\end{equation}
\item if $x \in \Sigma$ and $\rho_0(x) \neq 0$, then $\mathcal Q$ is quadratic in $\eta$ and has two single roots:
\begin{equation} \label{eq:Q3}
\mathcal Q(x,t) = (1 - \lambda \eta(t) g_+(x))(1 - \lambda \eta(t) g_-(x)),
\end{equation}
where
\begin{equation} \label{eq:gpm}
g_{\pm} = u_0'(x) \pm \sqrt\frac\kappa\lambda \abs{\rho_0(x)}.
\end{equation}
\end{enumerate}
Note that if $x \in \Sigma$, then the discriminant of $\mathcal Q$ with respect to $\eta$ is
\begin{equation*} \label{eq:disc}
\mathcal D(x) = 4\lambda\kappa \rho_0(x)^2.
\end{equation*}
To determine the locations where $\mathcal Q(x,t)$ vanishes earliest, we consider the cases $\lambda\kappa < 0$ and $\lambda\kappa > 0$ separately. \\

\noindent {\bf Case} $\lambda\kappa < 0$. \\
Suppose $\rho_0$ vanishes at finitely many points $\{x_1,\dots,x_n\} \subset [0,1]$, and that $\lambda u_0'(x_i) \neq 0$ for all $i = 1,\dots,n$ and $\lambda u_0'(x_i) > 0$ for some $i$. Note that $\Sigma = [0,1]$. Indeed, $c(x_i) =(\lambda u_0'(x_i))^2 \neq 0$ for all $i = 1,\dots,n$, and if $\rho_0(x) \neq 0$, then $c(x) = 0$ would imply
\begin{equation*}
0 \leq u_0'(x)^2 = \frac{\kappa}{\lambda} \rho_0(x)^2 < 0.
\end{equation*}
Hence $\mathcal Q$ is quadratic for each fixed $x \in [0,1]$. If $\rho_0(x) \neq 0$, then $\mathcal D(x) < 0$ and $\mathcal Q(x,0) = 1$ imply $0 < \mathcal{Q}(x,t) < \infty$ for all $\eta > 0$. From
\begin{equation*} \label{eq:Qak<0}
\mathcal Q(x_i,t) = (1 - \lambda \eta(t)u_0'(x_i))^2,
\end{equation*}
we see that if we set
\begin{equation} \label{eq:minmaxmM}
m_0 = \min_{x \in \{x_i\}} u_0'(x) < 0, \qquad\quad M_0 = \max_{x \in \{x_i\}} u_0'(x) > 0,
\end{equation}
then $\mathcal Q(x,t)$ vanishes first as
\begin{equation*} \label{eq:etastarmM}
\eta \nearrow \eta_* =
\begin{dcases}
\frac{1}{\lambda m_0} & \mbox{ if } \lambda < 0, \\
\frac{1}{\lambda M_0} & \mbox{ if } \lambda > 0.
\end{dcases}
\end{equation*}
Lastly, we assume that at the locations where $m_0$ (resp.\ $M_0$) is achieved, $u_0''(x) = 0$ and $u_0'''(x) > 0$ (resp.\ $u_0'''(x) < 0$). Then the $L^{\infty}([0,1])$ blowup time 
\begin{equation*}
t_* \equiv \lim_{\eta \uparrow \eta_*} t(\eta),
\end{equation*}
for $t(\eta)$ as in~\eqref{eq:t}, is finite for $\lambda \in (-\infty,0) \cup (1, \infty)$; see Theorem \ref{thm:blowupak<0}. \\

\noindent{\bf Case} $\lambda\kappa > 0$. \\
If $\lambda\kappa > 0$, then $\Omega$ is not necessarily empty, so all three representations~\eqref{eq:Q1}--\eqref{eq:Q3} of $\mathcal Q$ are possible. We remark that the case where $\eta_*$ is a double root of $\mathcal Q$ has been studied extensively in connection with the generalized inviscid Proudman-Johnson equation~\cite{SS13, SS15}. We direct the reader to these works for results in this direction, as well as for the special case where $Q$ is identically linear, i.e., $c(x)\equiv0$. Thus, for $\lambda\kappa>0$, we only treat the simple case where $\eta_*$ is a single root arising from~\eqref{eq:Q3}.\footnote[1]{If $\eta_*$ occurs as a root of~\eqref{eq:Q1}, a nearly identical arguments may be used.} 

For $\lambda < 0$, and respectively $\lambda > 0$, set
\begin{equation} \label{eq:minmaxnN}
n = \min_{\substack{x \in \Omega \\ \rho_0(x) \neq 0}} g_-(x) < 0, \qquad\quad N =  \max_{\substack{x \in \Omega \\ \rho_0(x) \neq 0}} g_+(x) > 0,
\end{equation}
and assume that at the finitely many locations where $n$ (resp.\ $N$) is achieved, the smooth initial data is such that $g_-$ (resp.\ $g_+$) has vanishing first order derivative and non-vanishing second order derivative. Then $\mathcal Q(x,t)$ vanishes earliest as
\begin{equation*} \label{eq:etastarnN}
\eta \nearrow \eta_* =
\begin{dcases}
\frac{1}{\lambda n} & \mbox{ if } \lambda < 0, \\
\frac{1}{\lambda N} & \mbox{ if } \lambda > 0,
\end{dcases}
\end{equation*}
and solutions to~\eqref{eq:ghs}--\eqref{eq:pbc} leave $L^{\infty}([0,1])$ in finite time (see Theorem \ref{thm:blowupak>0}).

\subsection{$L^p$ estimates} \label{subsec:Lpnorm}

In this section, we begin our study of the $L^p$, $p\in[1,+\infty)$, regularity of solutions to~\eqref{eq:ghs}--\eqref{eq:pbc} under the setup described in Section~\ref{subsec:blowup}. Using the representation formulae along characteristics, we explicitly compute the $p$-norms of $u_x$ and $\rho$. We then provide rudimentary upper and lower bounds for the $p$-norm of $u_x$, which, as we will see later, are sufficient to classify the $L^p$ regularity of solutions for much of the $\lambda$-$\kappa$ parameter space. We conclude with two general lemmas that will allow us to estimate, near the $L^{\infty}$ blowup time, the behavior of particular integral terms appearing in the bounds.

For as long as solutions exist, the Jacobian $\gamma_x$~\eqref{eq:gammax} is an increasing diffeomorphism of the unit circle $\mathbb S$ to itself~\cite{Wun10}. Hence
\begin{equation} \label{eq:uxnorm}
\begin{split}
\norm{u_x(\, \cdot \,, t)}_p
&= \left( \int_0^1 \left|u_x(\gamma(x,t),t)\right|^p \gamma_x \dd x \right)^{\frac1p} \\
&= \frac{\bar{\mathcal{P}}_0^{-2\lambda - \frac1p}}{\abs{\lambda}\eta}  \left( \int_0^1  \left| \frac{\mathcal{J}}{\mathcal{Q}^{1+\frac{1}{2\lambda p}}} - \frac{1}{\bar{\mathcal{P}}_0} \cdot \frac{1}{\mathcal{Q}^{\frac{1}{2\lambda p}}} \int_0^1 \frac{\mathcal{J}}{\mathcal{Q}^{1+\frac{1}{2\lambda}}} \dd \alpha  \right|^p \dd x \right)^{\frac1p}
\end{split}
\end{equation}
and
\begin{equation} \label{eq:rhonorm}
\begin{split}
\norm{\rho(\, \cdot \,, t)}_p^p
&= \int_0^1 \left|\rho(\gamma(x,t),t)\right|^p \gamma_x \dd x \\
&= \bar{\mathcal P}_0^{-2\lambda p-1} \int_0^1 \abs{\rho_0(x)}^p \frac{1}{\mathcal Q^{p + \frac{1}{2\lambda}}} \dd x. 
\end{split}
\end{equation}
By the convexity of the $p$-norm and Jensen's inequality,
\begin{equation} \label{eq:uxlower}
\norm{u_x(\,\cdot\,,t)}_p
\geq \frac{\bar{\mathcal{P}}_0^{-2\lambda - \frac1p}}{\abs{\lambda}\eta}  \left| \int_0^1   \frac{\mathcal{J}}{\mathcal{Q}^{1 + \frac{1}{2\lambda p}}} \dd x - \frac{1}{\bar{\mathcal{P}}_0}  \int_0^1 \frac{\mathrm{d} x}{\mathcal{Q}^{\frac{1}{2\lambda p}}}  \int_0^1 \frac{\mathcal{J}}{\mathcal{Q}^{1+\frac{1}{2\lambda}}} \dd x \right|,
\end{equation}
and by Minkowski's inequality,
\begin{equation} \label{eq:uxupper}
\norm{u_x(\,\cdot\,,t)}_p
\leq \frac{\bar{\mathcal{P}}_0^{-2\lambda - \frac1p}}{\abs{\lambda}\eta} \left[ \left( \int_0^1   \frac{\mathcal{J}^p}{\mathcal{Q}^{p+\frac{1}{2\lambda}}} \dd x \right)^{\frac{1}{p}} + \frac{1}{\bar{\mathcal{P}}_0^{1-\frac1p}} \int_0^1 \frac{\mathcal{J}}{\mathcal{Q}^{1+\frac{1}{2\lambda}}} \dd x  \right].
\end{equation}
Although the right-hand side of~\eqref{eq:uxlower} vanishes for $p = 1$, it does allow us to investigate the $L^p$ regularity of $u_x$ for $p \in (1,\infty)$. To prove Theorems \ref{thm:ak<0} and \ref{thm:ak>0}, we must first estimate the blowup rates of the integrals appearing in~\eqref{eq:rhonorm}--\eqref{eq:uxupper}. To this end, we use the following two lemmas, whose proofs are deferred to Appendix~\ref{app:int}.

\begin{lemma} \label{lem:estlamneg}
Suppose $h(x)\in C^2([0,1])$ attains a local minimum $m < 0$ at finitely many points $\underline{x} \in (0,1)$ with $h''(\underline{x}) > 0$. Set $\tau_* = \frac{1}{\lambda m}$ for $\lambda < 0$ and let $0<\tau<\tau_*$. Then for $\delta > 0$ and $\tau_* - \tau > 0$ both small,
\begin{equation} \label{eq:lamneg}
 \int_{\underline{x}-\delta}^{\underline{x}+\delta} \frac{\mathrm dx}{(1 - \lambda \tau h(x))^b} \sim 
  \begin{cases}
  C_2(1 - \lambda \tau m)^{\frac12 - b} & \mbox{if } b > 1/2, \\
  -C_3\log(1 - \lambda \tau m) & \mbox{if } b = 1/2, \\
  C & \mbox{if } b < 1/2,
  \end{cases}
\end{equation}
where $C_2$ and $C_3$ are positive constants given by
\begin{equation*} \label{eq:lamnegconst}
C_1 = \frac{h''(\underline{x})}{2}, \qquad C_2 = \frac{\Gamma\left(b - \frac12\right)}{\Gamma\left(b\right)} \sqrt\frac{\abs{m}\pi}{C_1}, \qquad C_3 = \frac{\abs{m}}{\sqrt{C_1}}, 
\end{equation*}
$C$ is a positive constant that depends only on $\lambda$ and $b$, and $\Gamma(\cdot)$ is the standard gamma function.
\end{lemma}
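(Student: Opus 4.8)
The plan is to treat this as a Laplace/Watson-type asymptotic analysis: localize the integral to a neighborhood of the minimizer, reduce the integrand to a standard algebraic form by rescaling, and read off the three cases from the convergence or divergence of a single normalizing integral.

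First I would record the mechanism driving the asymptotics. Since $\lambda<0$ and $m<0$, we have $\tau_*=1/(\lambda m)>0$, and for $0<\tau<\tau_*$ the quantity $A := 1-\lambda\tau m$ is positive and tends to $0^+$ as $\tau\nearrow\tau_*$. Because $\underline x$ is an interior local minimum, $h(\underline x)=m$, $h'(\underline x)=0$, $h''(\underline x)>0$, so Taylor's theorem gives, for $x$ near $\underline x$,
\[ h(x) = m + C_1(x-\underline x)^2 + o\big((x-\underline x)^2\big), \qquad C_1 = \tfrac12 h''(\underline x)>0, \]
whence
\[ 1-\lambda\tau h(x) = A + B\,(x-\underline x)^2 + o\big((x-\underline x)^2\big), \qquad B := -\lambda\tau C_1>0, \]
with $B\to C_1/|m|$ as $\tau\nearrow\tau_*$. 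If the minimum is attained at several points I would sum the finitely many identical contributions, so it suffices to analyze one.

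Next, substitute $y=x-\underline x$ and rescale $s=y\sqrt{B/A}$. Dropping the remainder for the moment, the local integral becomes
\[ \frac{A^{\frac12-b}}{\sqrt B}\int_{-\delta\sqrt{B/A}}^{\,\delta\sqrt{B/A}} \frac{ds}{(1+s^2)^{b}}, \]
so the $\tau\to\tau_*$ behavior is governed entirely by whether $\int(1+s^2)^{-b}\,ds$ converges as the limits run to $\pm\infty$. For $b>1/2$ it converges to the Beta integral $\sqrt\pi\,\Gamma(b-\tfrac12)/\Gamma(b)$; the prefactor $A^{1/2-b}$ survives, and using $1/\sqrt B\to\sqrt{|m|/C_1}$ yields exactly the constant $C_2$, which confirms the method is correctly normalized. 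For $b=1/2$ the antiderivative is $\operatorname{arcsinh}$, which grows like the $\log$ of the large endpoint $\delta\sqrt{B/A}$; the $A$-dependence there produces the stated logarithmic rate $-C_3\log A$. For $b<1/2$ the integrand $(1+s^2)^{-b}$ fails to be integrable at infinity and the two powers of $A$ cancel; equivalently, at $\tau=\tau_*$ the integrand $(1-\lambda\tau_* h)^{-b}\sim (C_1/|m|)^{-b}|y|^{-2b}$ is integrable near $\underline x$ since $2b<1$, so the integral converges to a finite constant $C$.

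The hard part — what turns this from heuristics into a proof — is controlling the two error sources so the rescaling above is legitimate. The contribution from $x$ bounded away from $\underline x$ is harmless: there $1-\lambda\tau h$ stays bounded below by a positive constant uniformly in $\tau\le\tau_*$, so it is negligible against the divergent or logarithmic leading terms when $b\ge 1/2$ and is simply part of the finite limit when $b<1/2$. The delicate point is absorbing the $o((x-\underline x)^2)$ remainder in the Taylor expansion, for which I would use a sandwich argument: fix $\eps>0$, choose $\delta$ small enough that $(C_1-\eps)y^2\le h(\underline x+y)-m\le(C_1+\eps)y^2$ for $|y|\le\delta$, run the rescaling with $C_1\pm\eps$ in place of $C_1$ to obtain matching upper and lower asymptotic constants, and let $\eps\to0$ at the end. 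The limits must be ordered carefully — fix $\delta$, hence the Taylor accuracy, first, then send $\tau\nearrow\tau_*$, then $\eps\to0$ — which is the standard Laplace-method bookkeeping and is where all the care lies.
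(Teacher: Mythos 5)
Your argument is correct, and for the cases $b\ge 1/2$ it coincides with the paper's: a second-order Taylor expansion at $\underline{x}$, a rescaling that reduces the integral to $\int(1+s^2)^{-b}\,ds$, and evaluation of that normalizing integral (the paper does it via the substitution $s=\tan\theta$, giving $2\int_0^{\pi/2}\cos^{2b-2}\theta\,\mathrm{d}\theta$, which is the same Beta integral you quote). Where you genuinely diverge is the case $0<b<1/2$: the paper writes the localized integral in closed form as $\frac{2\delta}{\varepsilon^b}\,{}_2F_1[\tfrac12,b;\tfrac32;-C_1\delta^2/\varepsilon]$ and then applies the analytic continuation formula for ${}_2F_1$ across $|z|=1$ (its Lemmas \ref{lem:analcont} and \ref{lem:hyper}) to extract the constant term $\frac{2\delta^{1-2b}}{(1-2b)C_1^b}$ and show the $\varepsilon^{b-1/2}$ correction is subleading. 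Your route --- observing either that the two powers of $A=1-\lambda\tau m$ cancel exactly when the rescaled integral diverges like $T^{1-2b}$, or equivalently that the limiting integrand is locally integrable since $2b<1$ so monotone convergence gives a finite limit --- is more elementary and avoids the special-function machinery entirely; what the hypergeometric computation buys in exchange is an explicit value for the limiting constant, which the lemma as stated does not require. Your $\varepsilon$-sandwich bookkeeping for the Taylor remainder (fix $\delta$, then $\tau\nearrow\tau_*$, then $\varepsilon\to0$) makes rigorous the ``$\sim$'' steps that the paper leaves implicit, and your normalization check reproduces the stated $C_2$; as a minor aside, both your computation and the paper's own method actually yield $\sqrt{|m|/C_1}$ for the logarithmic case, suggesting the displayed $C_3=|m|/\sqrt{C_1}$ contains a typo, but this does not affect the validity of either argument.
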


\begin{lemma} \label{lem:estlampos}
Suppose $h(x) \in C^2([0,1])$ attains a local maximum $M > 0$ at finitely many points $\bar{x} \in (0,1)$ with $h''(\bar{x}) < 0$. Set $\tau_* = \frac{1}{\lambda M}$ for $\lambda > 0$ and let $0<\tau<\tau_*$. Then for $\delta > 0$ and $\tau_* - \tau > 0$ both small,
\begin{equation} \label{eq:lampos}
\int_{\bar{x}-\delta}^{\bar{x}+\delta} \frac{\mathrm{d} x}{(1 - \lambda \tau h(x))^b} \sim 
  \begin{cases}
  C_5(1 - \lambda \tau M)^{\frac12 - b} & \mbox{if } b > 1/2, \\
  -C_6\log(1 - \lambda \tau M) & \mbox{if } b = 1/2, \\
  C & \mbox{if } b < 1/2,
\end{cases}
\end{equation}
where $C_5$ and $C_6$ are positive constants given by
\begin{equation*} \label{eq:lamposconst}
C_5 = \frac{h''(\bar{x})}{2}, \qquad C_5 = \frac{\Gamma\left(b - \frac12\right)}{\Gamma\left(b\right)} \sqrt\frac{M\pi}{\abs{C_4}}, \qquad C_6 = \frac{M}{\sqrt{C_4}}, 
\end{equation*}
and $C$ is a positive constant that depends only on $\lambda$ and $b$.
\end{lemma}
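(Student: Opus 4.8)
The plan is to recognize Lemma~\ref{lem:estlampos} as the exact mirror image of Lemma~\ref{lem:estlamneg} and to deduce it from the latter via the involution $h\mapsto-h$, $\lambda\mapsto-\lambda$. Indeed, if $h$ attains a local maximum $M>0$ at $\bar x$ with $h''(\bar x)<0$, then $\tilde h:=-h$ attains a local minimum $\tilde m:=-M<0$ at $\bar x$ with $\tilde h''(\bar x)=-h''(\bar x)>0$; setting $\tilde\lambda:=-\lambda<0$ gives the exact identity $1-\lambda\tau h(x)=1-\tilde\lambda\tau\tilde h(x)$ together with $\tilde\tau_*=\tfrac{1}{\tilde\lambda\tilde m}=\tfrac{1}{\lambda M}=\tau_*$. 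All hypotheses of Lemma~\ref{lem:estlamneg} hold for $(\tilde h,\tilde\lambda)$, and applying it reproduces \eqref{eq:lampos} verbatim, with $1-\tilde\lambda\tau\tilde m=1-\lambda\tau M$ and the constants $C_4,C_5,C_6$ arising as the images of $C_1,C_2,C_3$ under the substitution. Thus it suffices to have Lemma~\ref{lem:estlamneg}; the rest of this sketch outlines the Laplace-type argument that powers both lemmas.

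For that underlying estimate I fix one maximizer $\bar x$ and set $\eps:=1-\lambda\tau M$, which decreases to $0$ as $\tau\nearrow\tau_*$ while staying positive. First I would localize: since the maximizers are finite in number, there is a fixed $\delta>0$ with $\sup h<M$ off the union of the intervals $(\bar x-\delta,\bar x+\delta)$, so on that exterior region $1-\lambda\tau h(x)$ stays bounded below by a positive constant as $\tau\nearrow\tau_*$ and contributes only $O(1)$; the finitely many maximizers can then be handled one at a time and summed. Near $\bar x$, Taylor's theorem (using $h'(\bar x)=0$ and $h\in C^2$) gives $M-h(\bar x+y)=\tfrac12|h''(\bar x)|\,y^2(1+o(1))$, whence $1-\lambda\tau h=\eps+Ay^2(1+o(1))$ with $A=\tfrac12\lambda\tau|h''(\bar x)|\to A_*:=\tfrac{|h''(\bar x)|}{2M}$.

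The core is then the canonical integral $\int_{-\delta}^{\delta}(\eps+Ay^2)^{-b}\dd y$, which the rescaling $y=\sqrt{\eps/A}\,s$ turns into $A^{-1/2}\eps^{\frac12-b}\int_{-\delta\sqrt{A/\eps}}^{\delta\sqrt{A/\eps}}(1+s^2)^{-b}\dd s$. Sending $\eps\to0$ splits into three regimes according to the behavior of $(1+s^2)^{-b}$ at infinity: for $b>1/2$ the $s$-integral converges to the Beta value $\tfrac{\sqrt\pi\,\Gamma(b-1/2)}{\Gamma(b)}$, yielding the power law $\eps^{1/2-b}$ with the stated constant; for $b=1/2$ the $s$-integral equals $2\,\mathrm{arcsinh}(\delta\sqrt{A/\eps})\sim-\log\eps$, yielding the logarithm; and for $b<1/2$ the vanishing prefactor $\eps^{1/2-b}$ is exactly compensated, so the integral converges to the finite positive value $\int_{-\delta}^{\delta}(A_*y^2)^{-b}\dd y$ (integrable precisely because $2b<1$). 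Reinserting $\eps=1-\lambda\tau M$ and $A_*$ recovers \eqref{eq:lampos}.

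The one genuinely delicate step---the same one that does the real work in Lemma~\ref{lem:estlamneg}---is making the replacement of $h$ by its quadratic model uniform as $\eps\to0$, since the mass of the integrand concentrates at $\bar x$ exactly as the singularity forms. I would control it by a sandwich argument: given $\theta\in(0,1)$, shrink $\delta$ so that $(1-\theta)A_*y^2\le M-h(\bar x+y)\le(1+\theta)A_*y^2$ on $|y|<\delta$, bound the local integral above and below by canonical integrals with $A$ replaced by $(1\mp\theta)\lambda\tau A_*$, apply the previous paragraph to each, and let $\theta\to0$ using that the limiting constants depend continuously on the quadratic coefficient. For $b\ge1/2$ the leading terms then dominate the $O(1)$ exterior contribution and \eqref{eq:lampos} follows; for $b<1/2$ one instead invokes dominated convergence on the whole interval. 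I expect this uniform handling of the Taylor remainder to be the main obstacle, the bookkeeping for several maximizers and for the exterior region being routine.
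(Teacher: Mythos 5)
Your proof is correct, and its first half coincides with what the paper actually does: the paper proves only Lemma~\ref{lem:estlamneg} and asserts that the proof of Lemma~\ref{lem:estlampos} is ``identical''; your explicit involution $h\mapsto -h$, $\lambda\mapsto-\lambda$ (which preserves $1-\lambda\tau h$ and sends the max/concavity hypotheses to the min/convexity ones, with $1-\tilde\lambda\tau\tilde m=1-\lambda\tau M$) is simply a precise formulation of that assertion, and it correctly tracks the constants (modulo the paper's own typo $C_5=h''(\bar x)/2$, which should read $C_4$). Where you genuinely diverge is in the underlying estimate for $0<b<1/2$: the paper evaluates the canonical integral $\int(\eps+C_1 y^2)^{-b}\dd y$ exactly as $\frac{2\delta}{\eps^b}\,{}_2F_1[\tfrac12,b;\tfrac32;-C_1\delta^2/\eps]$ and then invokes the analytic continuation formula for the Gauss hypergeometric series (Lemmas~\ref{lem:analcont} and~\ref{lem:hyper}) to extract the constant leading term $\frac{2\delta^{1-2b}}{(1-2b)C_1^b}$, whereas you obtain the same limit by the rescaling $y=\sqrt{\eps/A}\,s$ followed by dominated convergence, using integrability of $y^{-2b}$ for $2b<1$. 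Your route is more elementary and dispenses with the hypergeometric machinery entirely; the paper's route has the advantage of producing the full expansion, including the subleading $\eps^{b-\frac12}$ correction and the $o(1)$ remainder, which would matter if one wanted error rates rather than bare asymptotics. For $b\geq 1/2$ the two arguments are the same trigonometric/arcsinh computation, and your sandwich argument for the uniformity of the quadratic Taylor replacement supplies a justification that the paper leaves implicit behind the symbol $\sim$.
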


For the convenience of the reader, we illustrate two applications of the lemmas and estimate the blowup rate of
\begin{equation*}
\int_0^1 \frac{\mathcal J}{\mathcal Q^{1 + \frac{1}{2\lambda}}} \dd x
\end{equation*}
for $\lambda\kappa < 0$ and $\lambda > 1$, and then for $\lambda\kappa>0$ with $\lambda > 0$.

Recall that for $\lambda\kappa < 0$ and $\lambda > 1$, the earliest root $\eta_*$ of the quadratic~\eqref{eq:Q2} occurs at locations where $M_0$ (as defined in~\eqref{eq:minmaxmM}ii)) is achieved; for simplicity, assume $M_0$ occurs at a single point $\bar{x}$. For $\eta_* = \frac{1}{\lambda M_0}$ and $\eta_* - \eta > 0$ small,~\eqref{eq:lampos}i) yields
\begin{equation*}
\int_0^1 \frac{\mathcal J}{\mathcal Q^{1+\frac{1}{2\lambda}}} \dd x
\sim
\int_{\bar{x} - \delta}^{\bar{x} + \delta} \frac{1}{(1 - \lambda\eta(t)u_0'(x))^{1+\frac{1}{\lambda}}} \dd x
\sim
C_2(1 - \lambda \eta M_0)^{-\frac12 - \frac1\lambda}.
\end{equation*}
For $\lambda\kappa > 0$ and $\lambda > 0$, the earliest root $\eta_*$ occurs as a single root of~\eqref{eq:Q3} at locations where $N$ (as defined in ~\eqref{eq:minmaxnN}ii)) is achieved. Again assume for simplicity that $N$ is achieved at a single point $\bar{x}$. Then
\begin{equation*}
\int_0^1 \frac{\mathcal J}{\mathcal Q^{1+\frac{1}{2\lambda}}} \dd x
\sim
\int_{\bar{x} - \delta}^{\bar{x} + \delta} \frac{C}{(1-\lambda\eta g_+(x))^{1+\frac{1}{2\lambda}}} \dd x
\sim
C_5(1 - \lambda \eta N)^{-\frac12 - \frac{1}{2\lambda}}
\end{equation*}
for $\eta_* = \frac{1}{\lambda N}$ and $\eta_* - \eta>0$ small.

\section{Proofs of the theorems} \label{sec:Lpregularity}

We are now ready to prove the Theorems. Throughout the proofs, $C$ will denote a generic positive constant that may change in value from line to line.

%
%
%
%
%

\begin{proof}[Proof of Theorem \textup{\ref{thm:ak<0}}] 
Let $M_0 > 0 > m_0$ be as in~\eqref{eq:minmaxmM}, and set
\begin{equation*} \label{eq:etastarmM2}
\eta_* =
\begin{dcases} 
\frac{1}{\lambda m_0} & \mbox{ if } \lambda < 0, \\
\frac{1}{\lambda M_0} & \mbox{ if } \lambda > 0.
\end{dcases}
\end{equation*}

\medskip

\noindent For $\lambda < 0$ and $1 \leq p < \infty$, Lemma~\ref{lem:estlamneg} yields

\begin{align*}  
\int_0^1 \frac{\mathcal J}{\mathcal Q^{1 + \frac{1}{2\lambda p}}} \dd x
&\sim 
\begin{cases}
C_2 (1 - \lambda \eta m_0)^{-\frac12 - \frac{1}{\lambda p}},\quad & \lambda \in (-\infty,-2/p), \\
-C_3 \log(1 - \lambda \eta m_0),  & \lambda = -2/p, \\
C,  & \lambda \in (-2/p, 0),
\end{cases} 
\\
\int_0^1 \frac{\mathcal J}{\mathcal Q^{1 + \frac{1}{2\lambda}}} \dd x
&\sim 
\begin{cases}
C_2 (1 - \lambda \eta m_0)^{-\frac12 - \frac{1}{\lambda}},\quad & \lambda \in (-\infty,-2), \\
-C_3 \log(1 - \lambda \eta m_0),  & \lambda = -2, \\
C,  & \lambda \in (-2, 0),
\end{cases} 
\\
\int_0^1 \frac{1}{\mathcal Q^{\frac{1}{2\lambda p}}} \dd x
&\sim C, \qquad \lambda \in (-\infty,0), 
\\
\bar{\mathcal P}_0 
&\sim C, \qquad \lambda \in (-\infty,0), 
\\
\int_0^1 \frac{\mathcal J^p}{\mathcal Q^{p+\frac{1}{2\lambda}}} \dd x
&\sim 
\begin{cases}
C_2 (1 - \lambda \eta m_0)^{\frac12 - \frac1\lambda - p},\quad & \lambda \in (-\infty, -2/(2p-1)), \\
-C_3 \log(1 - \lambda \eta m_0), & \lambda = -2/(2p-1), \\
C, & \lambda \in (-2/(2p-1),0)
\end{cases} 
\end{align*}

\medskip
\noindent for $\eta - \eta_* > 0$ small. Similarly for $\lambda > 1$ and $1 \leq p < 2$,

\begin{align*}  
\int_0^1 \frac{\mathcal J}{\mathcal Q^{1 + \frac{1}{2\lambda p}}} \dd x
&\sim
C_5 (1 - \lambda \eta M_0)^{-\frac12 - \frac{1}{\lambda p}}, \qquad \lambda \in (1,\infty), 
\\
\int_0^1 \frac{\mathcal J}{\mathcal Q^{1 + \frac{1}{2\lambda}}} \dd x 
&\sim
C_5 (1 - \lambda \eta M_0)^{-\frac12 - \frac{1}{\lambda}}, \qquad \lambda \in (1,\infty), 
\\
\int_0^1 \frac{1}{\mathcal Q^{\frac{1}{2\lambda p}}} \dd x
&\sim
\begin{cases}
C_5 (1 - \lambda \eta M_0)^{\frac12 - \frac{1}{\lambda p}},\quad & \lambda \in (1,2/p), \\
-C_6 \log(1 - \lambda \eta M_0),  & \lambda = 2/p, \\
C,  & \lambda \in (2/p, \infty).
\end{cases} 
\\
\bar{\mathcal P}_0 
&\sim
\begin{cases}
C_5 (1 - \lambda \eta M_0)^{\frac12 - \frac{1}{\lambda}},\quad & \lambda \in (1,2), \\
-C_6 \log(1 - \lambda \eta M_0), & \lambda = 2, \\
C,  & \lambda \in (1, \infty).
\end{cases} \label{eq:P0ak<0a>0}  
\end{align*}

\bigskip

\noindent Applying these estimates to~\eqref{eq:uxlower} and~\eqref{eq:uxupper}, we obtain, as $\eta \nearrow \eta_*$, the following: \\
For $\lambda < -2$,  
\begin{equation*}
\norm{u_x(\,\cdot\,,t)}_p \geq
C \abs{\frac{1}{(1 - \lambda \eta m_0)^{\frac12 + \frac{1}{\lambda p}}} - \frac{C}{(1 - \lambda \eta m_0)^{\frac12 + \frac{1}{\lambda}}}} \to +\infty.
\end{equation*}
For $\lambda = -2$,  
\begin{equation*}
\norm{u_x(\,\cdot\,,t)}_p \geq
C \abs{\frac{1}{(1 - \lambda \eta m_0)^{\frac12 + \frac{1}{\lambda p}}} + C \log (1 - \lambda \eta m_0)} \to +\infty.
\end{equation*}
For $-2 < \lambda < -2/p$, 
\begin{equation*}
\norm{u_x(\,\cdot\,,t)}_p \geq
C \abs{\frac{1}{(1 - \lambda \eta m_0)^{\frac12 + \frac{1}{\lambda p}}} - C} \to +\infty.
\end{equation*}
For $\lambda = -2/p$, 
\begin{equation*}
\norm{u_x(\,\cdot\,,t)}_p \geq C \abs{\log(1-\lambda\eta m_0) + C} \to +\infty.
\end{equation*}
For $-2/(2p-1) < \lambda < 0$,  
\begin{equation*}
\norm{u_x(\,\cdot\,,t)}_p \to C < +\infty.
\end{equation*}
For $1 < \lambda < 2$, choose $p' \leq p$ such that $1 < p' < 5/3$ and $\frac{3 p'-1}{2 p'}<\lambda <\frac{2}{p'}$. Then $\lambda + \frac{1}{2p'} - \frac{3}{2}>0$, so that
\begin{equation*}  
\norm{u_x(\,\cdot\,,t)}_{p} \geq \norm{u_x(\,\cdot\,,t)}_{p'} \geq \frac{C}{(1-\lambda\eta M_0)^{\lambda + \frac{1}{2 p'}-\frac{3}{2}}} \to +\infty.
\end{equation*}
For $\lambda = 2$,  
\begin{equation*}
\begin{split}
\norm{u_x(\,\cdot\,,t)}_p
&\geq \frac{C}{\abs{\log(1-2\eta M_0)}^{4+\frac1p}} \abs{\frac{1}{(1-2\eta M_0)^{\frac12 + \frac{1}{2 p}} }+ \frac{C}{(1-2\eta M_0)\log(1-2\eta M_0)}} \\
&= \frac{C}{(1-2\eta M_0)\abs{\log(1-2\eta M_0)}^{5+\frac1p}} \abs{(1-2\eta M_0)^{\frac{1}{2}-\frac{1}{2p}} \log(1-2\eta M_0) + C} \to +\infty.
\end{split}
\end{equation*}
For $\lambda > 2$,  
\begin{equation*}
\norm{u_x(\,\cdot\,,t)}_p \geq
C \abs{\frac{1}{(1 - \lambda\eta M_0)^{\frac12 + \frac{1}{\lambda p}}} - \frac{C}{(1 - \lambda\eta M_0)^{\frac12 + \frac1\lambda}}} \to +\infty.
\end{equation*}
\end{proof}

\begin{proof}[Proof of Theorem \textup{\ref{thm:ak>0}}]
Let $N > 0 > n$ be as in~\eqref{eq:minmaxnN}, and set
\begin{equation*} \label{eq:etastarnN2}
\eta_* =
\begin{dcases}
\frac{1}{\lambda n} & \mbox{ if } \lambda < 0 \\
\frac{1}{\lambda N} & \mbox{ if } \lambda > 0. 
\end{dcases}
\end{equation*}
\medskip

\noindent For $\lambda < 0$ and $1 \leq p < \infty$, Lemma~\ref{lem:estlamneg} implies that

\begin{align*}  
\int_0^1 \frac{\mathcal J}{\mathcal Q^{1 + \frac{1}{2\lambda p}}} \dd x
&\sim 
\begin{cases}
C_2 (1 - \lambda \eta n)^{-\frac12 - \frac{1}{2\lambda p}},\quad & \lambda \in (-\infty,-1/p), \\
-C_3 \log(1 - \lambda \eta n),  & \lambda = -1/p, \\
C,  & \lambda \in (-1/p, 0),
\end{cases}  
\\
\int_0^1 \frac{\mathcal J}{\mathcal Q^{1 + \frac{1}{2\lambda}}} \dd x
&\sim 
\begin{cases}
C_2 (1 - \lambda \eta n)^{-\frac12 - \frac{1}{2\lambda}},\quad & \lambda \in (-\infty,-1), \\
-C_3 \log(1 - \lambda \eta n),  & \lambda = -1, \\
C,  & \lambda \in (-1, 0).
\end{cases}  
\\
\int_0^1 \frac{1}{\mathcal Q^{\frac{1}{2\lambda p}}} \dd x
&\sim C, \qquad \lambda \in (-\infty,0),  
\\
\bar{\mathcal P}_0 =
&\sim C, \qquad \lambda \in (-\infty,0),  
\\
\int_0^1 \frac{\mathcal J^p}{\mathcal Q^{p+\frac{1}{2\lambda}}} \dd x
&\sim 
\begin{cases}
C_2(1 - \lambda \eta n)^{\frac12 - p - \frac1{2\lambda}},\quad & \lambda \in (-\infty, -1/(2p-1)), \\
-C_3 \log(1 - \lambda \eta n), & \lambda = -1/(2p-1), \\
C, & \lambda \in (-1/(2p-1),0),
\end{cases}  
\\
\int_0^1 \frac{1}{\mathcal Q^{p+\frac{1}{2\lambda}}} \dd x
&\sim
\begin{cases}
C_2(1 - \lambda \eta n)^{\frac12 - \frac1{2\lambda} - p},\quad & \lambda \in (-\infty, -1/(2p-1)), \\
-C_3 \log(1 - \lambda \eta n), & \lambda = -1/(2p-1), \\
C, & \lambda \in (-1/(2p-1),0)
\end{cases}  
\end{align*}

\medskip
\noindent for $\eta_* - \eta > 0$ small. Similarly, for $\lambda > 0$ and $1 \leq p < \infty$,

\begin{align*}  
\int_0^1 \frac{\mathcal J}{\mathcal Q^{1 + \frac{1}{2\lambda p}}} \dd x
&\sim
C_5 (1 - \lambda \eta N)^{-\frac12 - \frac{1}{2\lambda p}}, \qquad \lambda \in (0,\infty), 
\\
\int_0^1 \frac{\mathcal J}{\mathcal Q^{1 + \frac{1}{2\lambda}}} \dd x 
&\sim
C_5 (1 - \lambda \eta N)^{-\frac12 - \frac{1}{2\lambda}}, \qquad \lambda \in (0,\infty), 
\\
\int_0^1 \frac{1}{\mathcal Q^{\frac{1}{2\lambda p}}} \dd x
&\sim
\begin{cases}
C_5 (1 - \lambda \eta N)^{\frac12 - \frac{1}{2\lambda p}},\quad & \lambda \in (0,1/p), \\
-C_6 \log(1 - \lambda \eta N), & \lambda = 1/p, \\
C,  & \lambda \in (1/p, \infty),
\end{cases} 
\\
\bar{\mathcal P}_0 = 
&\sim
\begin{cases}
C_5 (1 - \lambda \eta N)^{\frac12 - \frac{1}{2\lambda}},\quad & \lambda \in (0,1), \\
-C_6 \log(1 - \lambda \eta N), & \lambda = 1, \\
C,  & \lambda \in (1, \infty),
\end{cases} 
\\
\int_0^1 \frac{1}{\mathcal Q^{p+\frac{1}{2\lambda}}} \dd x
&\sim
C_5(1 - \lambda\eta N)^{\frac12 - \frac{1}{2\lambda} - p}, \qquad \lambda \in (0,\infty). 
\end{align*}

\bigskip

\noindent Applying these estimates to~\eqref{eq:uxlower} and~\eqref{eq:uxupper}, we obtain, as $\eta \nearrow \eta_*$, the following: \\
If $\lambda < -1$,  
\begin{equation*}
\norm{u_x(\,\cdot\,,t)}_p \geq C\abs{\frac{1}{(1-\lambda\eta n)^{\frac12+\frac{1}{2\lambda p}}} -\frac{C}{(1-\lambda\eta n)^{\frac12+\frac{1}{2\lambda}}}} \to +\infty.
\end{equation*}
If $\lambda = -1$,  
\begin{equation*}
\norm{u_x(\,\cdot\,,t)}_p \geq C\abs{\frac{1}{(1-\lambda\eta n)^{\frac12+\frac{1}{2\lambda p}}} + C\log(1-\lambda\eta n)} \to +\infty.
\end{equation*}
If $-1 < \lambda < -1/p$,  
\begin{equation*}
\norm{u_x(\,\cdot\,,t)}_p \geq C \abs{\frac{1}{(1-\lambda\eta n)^{\frac12+\frac{1}{2\lambda p}}} - C} \to +\infty.
\end{equation*}
If $\lambda = -1/p$,  
\begin{equation*}
\norm{u_x(\,\cdot\,,t)}_p \geq C \abs{\log(1-\lambda\eta n) + C} \to +\infty.
\end{equation*}
If $-1/(2p-1) < \lambda < 0$,  
\begin{equation*}
\norm{u_x(\,\cdot\,,t)}_p \to C < \infty.
\end{equation*}
If $0 < \lambda < 1$, choose $p' \leq p$ such that $1 < p' < 3$ and $\frac{p'-1}{2p'} < \lambda < \frac1{p'}$. Then $\lambda + \frac{1}{2p'} - \frac12 > 0$, so that
\begin{equation*}   
\norm{u_x(\,\cdot\,,t)}_{p} \geq \norm{u_x(\,\cdot\,,t)}_{p'} \geq \frac{C}{(1-\lambda\eta N)^{\lambda+\frac{1}{2p'}-\frac12}} \to +\infty.
\end{equation*}
If $\lambda = 1$,  
\begin{equation*}
\begin{split}
\norm{u_x(\,\cdot\,,t)}_p
&\geq \frac{C}{(-\log(1-\lambda\eta N))^{2 + \frac1p}} \abs{ \frac{1}{(1-\lambda\eta N)^{\frac12 + \frac1{2 p}}}+ \frac{C}{(1-\lambda\eta N)\log (1-\lambda\eta N)}} \\
&= \frac{C}{(1-\lambda\eta N)\abs{\log(1-\lambda\eta N)}^{3+\frac1p}} \abs{ (1-\lambda\eta N)^{\frac12 - \frac{1}{2p}} \log(1-\lambda \eta N) + C} \to +\infty.
\end{split}
\end{equation*}
If $\lambda > 1$,  
\begin{equation*}
\norm{u_x(\,\cdot\,,t)}_p \geq C \abs{ \frac{1}{(1-\lambda\eta N)^{\frac12+\frac{1}{2\lambda p}}} - \frac{C}{(1-\lambda\eta N)^{\frac12 + \frac{1}{2\lambda}}}} \to +\infty.
\end{equation*}
Finally, for $\eta_* - \eta > 0$ small,~\eqref{eq:rhonorm} yields
\begin{equation*}
\norm{\rho(\,\cdot\,,t)}_p^p
\sim
\begin{cases}
C(1 - \lambda \eta n)^{\frac12 - \frac1{2\lambda} - p},\quad & \lambda \in (-\infty, -1/(2p-1)), \\
C \abs{\log(1 - \lambda \eta n)}, & \lambda = -1/(2p-1), \\
C, & \lambda \in (-1/(2p-1),0).
\end{cases}
\end{equation*}
for $\lambda < 0$ and
\begin{equation*}
\norm{\rho(\,\cdot\,,t)}_p^p
\sim
\begin{cases}
C(1-\lambda\eta N)^{-\lambda p}, & \lambda \in (0,1), \\
C(1-\lambda\eta N)^{-p}  \left|\log(1 - \lambda \eta n)\right|^{-2p - 1},\quad & \lambda = 1, \\
C(1-\lambda\eta N)^{\frac12 - \frac1{2\lambda} - p}, & \lambda \in (1,\infty),
\end{cases}
\end{equation*}
for $\lambda > 0$. This completes the proof.
\end{proof}


\section{Three-point boundary conditions}\label{sec:threept}

In this section, we establish finite time blowup of a local-in-time solution of the IVP
\begin{equation} \label{eq:ghs3}
\begin{cases}
u_{xxt}+uu_{xxx}+(1-2\lambda)u_xu_{xx}-2\kappa\rho\rho_x=0, & x\in[0,1],\quad t > 0, \\
\rho_t + u\rho_x = 2\lambda \rho u_x, & x\in[0,1],\quad t > 0, \\
u(x,0) = u_0(x), \quad \rho(x,0) = \rho_0(x), & x \in [0,1],
\end{cases}
\end{equation}
with the homogeneous three-point boundary condition
\begin{equation} \label{eq:ghs_3p}
u(1,t) = u_x(0,t) = u_x(1,t) = 0, \qquad \rho(1,t) = 0
\end{equation}
for particular values of $\lambda$ and $\kappa$ (see Theorem~\ref{thm:threept}). For smooth $(u_0,\rho_0)$ satisfying the appropriate boundary condition, our result implies the existence of parameter values $(\lambda,\kappa)$ such that solutions to~\eqref{eq:ghs}--\eqref{eq:pbc} stay smooth for all time, whereas those of~\eqref{eq:ghs3}--\eqref{eq:ghs_3p} blowup in finite time.

\begin{proof}[Proof of Theorem \textup{\ref{thm:threept}}]

Multiplying~\eqref{eq:ghs3}i) by $x$ and integrating by parts yields
\begin{equation*}
0 = -\ddt \int_0^1 u_x \dd x + (1 + \lambda) \int_0^1 u_x^2 \dd x + \kappa \int_0^1 \rho^2 \dd x
\end{equation*}
Set
\begin{equation*}
H(t) := \int_0^1 u_x \dd x = -u(0,t), 
\end{equation*}
so that
\begin{equation*}
H'(t) = (1 + \lambda) \norm{u_x}_2^2 + \kappa \norm{\rho}_2^2.
\end{equation*}
Note that by the Cauchy-Schwarz inequality,
\begin{equation*}
H(t)^2 \leq \norm{u_x}_2^2.
\end{equation*}
Then for $\lambda > -1$ and $\kappa \geq 0$,
\begin{equation*} \label{eq:3pricatti}
H'(t) \geq (1 + \lambda)H(t)^2 \geq 0.
\end{equation*}
Thus choosing $H(0) > 0$ we see that, as long as the solution exists, $H(t) > 0$. Integrating then yields
\begin{equation*}
0 < \frac{1}{H(t)} \leq \frac{1}{H(0)} - (1 + \lambda)t,
\end{equation*}
whose right-hand side vanishes as $t$ approaches the finite time $T=\frac{1}{(1+\lambda)H(0)}$. The case $(\lambda,\kappa) \in (-\infty,-1) \times (-\infty,0]$ can be handled similarly.
\end{proof}

\begin{appendix}


\section{Solving on characteristics} \label{app:soln}

In this section we summarize the derivation of the solution formulae for~\eqref{eq:ghs}--\eqref{eq:pbc} along characteristics for arbitrary $(\lambda,\kappa) \in \R\setminus\{0\} \times \R$. We use the notation in Section~\ref{sec:intro} for any auxiliary functions. For detailed computations, the reader is referred to Section~2 of~\cite{Sar15}. In brief, the method of characteristics is used to write $u_x \circ \gamma$ and $\rho \circ \gamma$ in terms of the Jacobian $\gamma_x$. The system is reformulated as a second-order nonlinear ODE for $\gamma_x$, which is then solved via reduction of order.

Fix $x \in [0,1]$, and define, for as long as $u$ exists, characteristics $\gamma$ via the initial value problem
\begin{equation} \label{eq:characteristics}
	\dot\gamma(x,t)=u(\gamma(x,t),t), \qquad \gamma(x,0)=x,
\end{equation}
where $\dot{} = \ddt$. Differentiating in space yields
\begin{equation} \label{eq:gamma_xt}
	\dot\gamma_{x}=u_x(\gamma(x,t),t)\cdot \gamma_x, \qquad \gamma_x(x,0)=1,
\end{equation}
so that
\begin{equation} \label{eq:gammax_deriv}
	\gamma_x = e^{\int_0^tu_x(\gamma(x,s),s) \dd s}
\end{equation}
Differentiating $\rho \circ \gamma$ in time and using~\eqref{eq:ghs}ii) immediately gives
\begin{equation} \label{eq:rhochar}
	\rho(\gamma(x,t),t) = \rho_0(x) \cdot e^{2\lambda\int_0^tu_x(\gamma(x,s),s) ds} = \rho_0(x) \cdot \gamma_x^{2\lambda}.
\end{equation}
Now, it follows from~\eqref{eq:gamma_xt} and~\eqref{eq:rhochar} that~\eqref{eq:ghs}i) along characteristics is given by
\begin{equation} \label{eq:uxGamma}
\ddt\left(u_x(\gamma(x,t),t)\right)=\lambda\left(\dot\gamma_{x}\gamma_{x}^{-1}\right)^2+\kappa\left(\rho_0\cdot\gamma_x^{2\lambda}\right)^2+I(t).
\end{equation}
Differentiating~\eqref{eq:gamma_xt} in time, substituting into~\eqref{eq:uxGamma}, and re-arranging yields the following second-order nonlinear ODE for $\omega(x,t) = \gamma_x(x,t)^{-\lambda}$:
\begin{equation} \label{eq:ODEnonhomo}
	\ddot w(x,t)+\lambda I(t)\omega(x,t)=-\lambda\kappa\rho_0(x)^2\omega^{-3}.
\end{equation}
First consider the corresponding linear homogeneous ODE
\begin{equation} \label{eq:ODEhomo}
	\ddot y(x,t)+\lambda I(t)y(x,t)=0.
\end{equation}
Suppose $\phi_1(t)$ and $\phi_2(t)$ are two linearly independent solutions to~\eqref{eq:ODEhomo} satisfying $\phi_1(0)=\dot\phi_2(0)=1$ and $\dot\phi_1(0)=\phi_2(0)=0$. Then the general solution to~\eqref{eq:ODEhomo} is of the form
\begin{equation} \label{eq:gensol}
y(x,t) = c_1(x)\phi_1(t) + c_2(x)\phi_2(t),
\end{equation}
where, by reduction of order,
\begin{equation} \label{eq:reduction}
	\phi_2(t)=\phi_1(t)\eta(t), \qquad \eta(t)=\int_0^t\frac{\mathrm{d}s}{\phi_1(s)^2}.
\end{equation}
Since $\dot\omega = -\lambda \gamma_x^{-\lambda-1}\dot\gamma_x$ and $\gamma_x(x,0) = 1$, it follows that $\omega(x,0) = 1$ and $\dot\omega(x,0) = -\lambda u_0'(x)$, from which the coefficients $c_1(x)$ and $c_2(x)$ may be obtained. This reduces~\eqref{eq:gensol} to
\begin{equation} \label{eq:yreduce}
y(x,t) = \phi_1(t) \mathcal J(x,t),
\end{equation}
Turning to the nonhomogeneous equation~\eqref{eq:ODEnonhomo}, write
\begin{equation} \label{eq:omegaxt}
\omega(x,t) = z(\eta(t))y(x,t),
\end{equation}
for some function $z$ with $z(0) = 1$ and $z'(0) = 0$. Plugging~\eqref{eq:omegaxt} into~\eqref{eq:ODEnonhomo} and yields the following IVP for $\mu(x,\eta) = z(\eta)\mathcal J(x,t)$:
\begin{equation} \label{eq:mu}
\begin{cases}
\mu_{\eta\eta} = -\lambda\kappa\rho_0(x)^2 \mu^{-3}, \\
\mu(x,0) = 1, \quad \mu_\eta(x,0) = -\lambda u_0'(x).
\end{cases}
\end{equation}
Writing~\eqref{eq:mu} as a first-order equation, solving for $\mu_\eta$, integrating, and solving for $z(\eta)$ yields
\begin{equation}
z(\eta) = \frac{\mathcal Q(x,t)}{\mathcal J(x,t)^2}.
\end{equation}
It follows from $\omega(x,t) = \gamma_x(x,t)^{-\lambda}$ and equations~\eqref{eq:yreduce} and~\eqref{eq:omegaxt} that
\begin{equation} \label{eq:gammaxphi1}
\gamma_x(x,t) = \left[ \phi_1(t)^2 \mathcal Q(x,t) \right]^{-\frac{1}{2\lambda}}.
\end{equation}
It remains to determine $\phi_1$. Note that by the uniqueness of solutions to~\eqref{eq:characteristics} and periodicity that
\begin{equation} \label{eq:gammaperiodic}
\gamma(x+1,t) = 1 + \gamma(x,t)
\end{equation}
for all $x \in [0,1]$. Integrating~\eqref{eq:gammaxphi1} in space therefore yields
\begin{equation} \label{eq:phi1}
\phi_1(t) = \bar{\mathcal{P}}_0(t)^\lambda.
\end{equation}
It follows that
\begin{equation} \label{eq:gammaxfinal}
\gamma_x = \bar{\mathcal{P}}_0^{-1} \mathcal Q^{-\frac{1}{2\lambda}}.
\end{equation}
The resulting expressions~\eqref{eq:ux} and~\eqref{eq:rho} for $u_x$ and $\rho$ follow from~\eqref{eq:gammaxfinal},~\eqref{eq:gammax_deriv}, and~\eqref{eq:rhochar}.


\section{Integral estimates} \label{app:int}

In this section we briefly outline the technique used to estimate the spatial integrals in~\eqref{eq:uxlower} and~\eqref{eq:uxupper} as $\eta \nearrow \eta_*$. The proofs of Lemmas \ref{lem:estlamneg} and \ref{lem:estlampos} are identical so we only prove the former. The estimates are based on a Taylor approximation, with the $b \geq 1/2$ case following by straightforward integration. For the $0 < b < 1/2$ case, we make use of the Gauss hypergeometric series (see~\cite{hypergeo1,hypergeo2,hypergeo3})
\begin{equation} \label{eq:hypergeo}
{}_2F_1\left[a,b;c;z\right]\equiv\sum_{k=0}^{\infty}\frac{\left(a\right)_k(b)_k}{\left(c\right)_k\,k!}z^k,\qquad \lvert z\rvert< 1
\end{equation} 
defined for $c\notin\mathbb{Z}^-\cup\{0\}$ and $(x)_k,\, k\in\mathbb{N}\cup\{0\}$, the Pochhammer symbol
\begin{equation}
(x)_0=1, \qquad (x)_k=x(x+1)\cdots(x+k-1).
\end{equation}
We also need the following two results:
\begin{lemma}[see~\cite{hypergeo2,hypergeo3}] \label{lem:analcont}
Suppose $\lvert\text{arg}\left(-z\right)\rvert<\pi$ and $a,b,c,a-b\notin\mathbb{Z}.$ The analytic continuation for $\lvert z\rvert>1$ of the series (\ref{eq:hypergeo}) is given by 
\begin{equation} \label{eq:analform}
\begin{split}
{}_2F_1[a,b;c;z]=&\frac{\Gamma(c)\Gamma(a-b)(-z)^{-b}{}_2F_1[b,1+b-c;1+b-a;z^{-1}]}{\Gamma(a)\Gamma(c-b)}
\\
&+\frac{\Gamma(c)\Gamma(b-a)(-z)^{-a}{}_2F_1[a,1+a-c;1+a-b;z^{-1}]}{\Gamma(b)\Gamma(c-a)}.
\end{split}
\end{equation} 
\end{lemma}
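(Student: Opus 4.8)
\emph{Proof proposal.} Conceptually, \eqref{eq:analform} is a connection formula for the hypergeometric ODE
$$
z(1-z)w''+\bigl[c-(a+b+1)z\bigr]w'-ab\,w=0,
$$
which has regular singular points at $0,1,\infty$ with exponents $a,b$ at infinity. The two series on the right of \eqref{eq:analform}, multiplied by $(-z)^{-a}$ and $(-z)^{-b}$, are precisely the Frobenius solutions at $z=\infty$ and furnish a basis for $\lvert z\rvert>1$ (this is where $a-b\notin\mathbb Z$ enters: it keeps the exponents distinct modulo $\mathbb Z$, so no logarithmic solution appears). The analytic continuation of ${}_2F_1[a,b;c;z]$ must therefore be \emph{some} linear combination of this basis, and the entire content of the lemma is the identification of the two connection coefficients. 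The plan is to pin them down by evaluating the Mellin--Barnes representation of ${}_2F_1$ by residues.

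Concretely, I would begin from the Barnes integral
$$
{}_2F_1[a,b;c;z]=\frac{\Gamma(c)}{\Gamma(a)\Gamma(b)}\,\frac{1}{2\pi i}\int_{L}\frac{\Gamma(a+s)\Gamma(b+s)\Gamma(-s)}{\Gamma(c+s)}\,(-z)^{s}\dd s,
$$
valid for $\lvert\arg(-z)\rvert<\pi$, where the vertical contour $L$ separates the poles of $\Gamma(-s)$ at $s=0,1,2,\dots$ from those of $\Gamma(a+s)\Gamma(b+s)$ at $s=-a-n$ and $s=-b-n$, $n\ge 0$. Closing $L$ to the right recovers the defining series \eqref{eq:hypergeo} for $\lvert z\rvert<1$; the lemma is obtained by instead closing to the \emph{left}, which is the expansion relevant for $\lvert z\rvert>1$.

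The next step is the residue bookkeeping. Since $a,b,c,a-b\notin\mathbb Z$, the two families of left poles are disjoint and simple. Using $\operatorname{Res}_{s=-a-n}\Gamma(a+s)=(-1)^n/n!$ together with the shift identity $\Gamma(x-n)=(-1)^n\Gamma(x)/(1-x)_n$ to convert the surviving gamma factors into Pochhammer quotients, the residues at $s=-a-n$ sum to
$$
\frac{\Gamma(c)\Gamma(b-a)}{\Gamma(b)\Gamma(c-a)}\,(-z)^{-a}\sum_{n\ge 0}\frac{(a)_n(1+a-c)_n}{(1+a-b)_n\,n!}\,z^{-n},
$$
which is exactly the first term of \eqref{eq:analform}; the residues at $s=-b-n$ produce the second term by the symmetric computation, with $a$ and $b$ interchanged. (A short check confirms that the accumulated powers of $(-1)$ cancel, leaving a clean series in $z^{-1}$.)

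The remaining, and most delicate, step is the analytic justification of the contour closure: one must show that the integral over the large left semicircular arcs tends to $0$ as their radius grows along a sequence staying a fixed distance from the poles. This is where the hypotheses are used quantitatively — the factor $(-z)^{s}=e^{s\log(-z)}$ is controlled by $\lvert z\rvert>1$ and $\lvert\arg(-z)\rvert<\pi$, while Stirling's asymptotics for the ratio $\Gamma(a+s)\Gamma(b+s)\Gamma(-s)/\Gamma(c+s)$ bound the integrand on the arcs. I expect combining the polynomial growth control from Stirling with the exponential decay supplied by $\lvert z\rvert^{\Re s}$ (and choosing the radii to avoid the poles) to be the main obstacle; the residue computation above, by contrast, is routine once the Pochhammer identities are in place. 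An alternative, purely ODE-theoretic route would skip the integral entirely, write ${}_2F_1[a,b;c;z]=A\,u_a+B\,u_b$ in the infinity basis, and fix $A,B$ by matching the $z\to\infty$ asymptotics extracted from the Euler integral representation of ${}_2F_1$; determining $A$ and $B$ is then the crux and yields the same gamma-function quotients.
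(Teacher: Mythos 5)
The paper does not prove this lemma at all: it is quoted as a classical fact from the cited references (Erd\'elyi et al.\ and Gasper--Rahman), so there is no in-paper argument for your proposal to diverge from. What you outline is the standard Mellin--Barnes proof of the connection formula, and the part you actually execute is correct. The residue sum over the poles of $\Gamma(a+s)$ at $s=-a-n$ does equal
\[
\frac{\Gamma(c)\Gamma(b-a)}{\Gamma(b)\Gamma(c-a)}\,(-z)^{-a}\,{}_2F_1\!\left[a,1+a-c;1+a-b;z^{-1}\right],
\]
with the four factors of $(-1)^{n}$ (from $\operatorname{Res}\Gamma(a+s)$, from $\Gamma(b-a-n)$, from $1/\Gamma(c-a-n)$, and from $(-z)^{-n}$) cancelling as you claim --- though note this is the \emph{second} displayed term of \eqref{eq:analform}, not the first; the poles of $\Gamma(b+s)$ give the $(-z)^{-b}$ term. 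That labeling slip is harmless. The hypotheses are consumed exactly where you place them: $a-b\notin\mathbb{Z}$ keeps the two left families of poles disjoint and simple, and $a,b\notin\mathbb{Z}$ together with $c\notin\mathbb{Z}^{-}\cup\{0\}$ guarantees that a contour separating left poles from right poles exists.

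The only substantive thing you defer is the vanishing of the integral over the large left arcs, and that is precisely where both hypotheses on $z$ are used: after Stirling and the reflection formula, the gamma ratio contributes a factor decaying like $e^{-\pi\abs{\Im s}}$ times polynomial growth, while $\abs{(-z)^{s}}=e^{\Re s\,\log\abs{z}-\Im s\,\arg(-z)}$; the condition $\abs{\arg(-z)}<\pi$ leaves net exponential decay in $\abs{\Im s}$, and $\abs{z}>1$ forces decay as $\Re s\to-\infty$ (radii being chosen to avoid the poles). This is routine but must be written out for a complete proof. Since the paper itself treats the lemma as a citation, your sketch is an adequate reconstruction of the proof found in the quoted sources; the alternative ODE route you mention is also viable but typically circles back to an integral representation to pin down the two connection coefficients.
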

\begin{lemma}[see~\cite{SS13}] \label{lem:hyper}
Suppose $b\in(-\infty,2)\backslash\{1/2\},\,\,0\leq\left|\beta-\beta_0\right|\leq1$ and $\epsilon\geq C_0$ for some $C_0>0.$ Then 
\begin{equation} \label{eq:derseries}
\frac{1}{\epsilon^b}\,\frac{d}{d\beta}\left((\beta-\beta_0){}_2F_1\left[\frac{1}{2},b;\frac{3}{2};-\frac{C_0(\beta-\beta_0)^2}{\epsilon}\right]\right)=(\epsilon+C_0(\beta-\beta_0)^2)^{-b}.
\end{equation} 
\end{lemma}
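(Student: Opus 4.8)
The plan is to prove~\eqref{eq:derseries} by collapsing the Gauss series~\eqref{eq:hypergeo} into a binomial series after differentiation; once the coefficients are simplified, the whole statement reduces to an elementary manipulation of power series. First I would reduce the ratio of Pochhammer symbols appearing in the coefficients of ${}_2F_1[\tfrac12,b;\tfrac32;z]$. Writing $\left(\tfrac12\right)_k=\tfrac12\cdot\tfrac32\cdots\tfrac{2k-1}{2}$ and $\left(\tfrac32\right)_k=\tfrac32\cdot\tfrac52\cdots\tfrac{2k+1}{2}$, all but the extreme factors cancel, giving $\left(\tfrac12\right)_k/\left(\tfrac32\right)_k=1/(2k+1)$. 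Hence, for $|z|<1$,
\[
{}_2F_1\!\left[\tfrac12,b;\tfrac32;z\right]=\sum_{k=0}^{\infty}\frac{(b)_k}{(2k+1)\,k!}\,z^{k}.
\]

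Next I would set $w=\beta-\beta_0$ and $z=-C_0w^2/\epsilon$, so that the bracketed quantity in~\eqref{eq:derseries} becomes a power series in $w$ carrying only odd powers,
\[
w\,{}_2F_1\!\left[\tfrac12,b;\tfrac32;-\tfrac{C_0w^2}{\epsilon}\right]=\sum_{k=0}^{\infty}\frac{(b)_k}{(2k+1)\,k!}\left(-\frac{C_0}{\epsilon}\right)^{k}w^{2k+1}.
\]
Differentiating term by term in $\beta$, the factor $2k+1$ produced by $\tfrac{d}{d\beta}w^{2k+1}$ cancels the denominator $2k+1$, leaving
\[
\frac{d}{d\beta}\left(w\,{}_2F_1\!\left[\tfrac12,b;\tfrac32;-\tfrac{C_0w^2}{\epsilon}\right]\right)=\sum_{k=0}^{\infty}\frac{(b)_k}{k!}\left(-\frac{C_0w^2}{\epsilon}\right)^{k}=\left(1+\frac{C_0w^2}{\epsilon}\right)^{-b},
\]
the last step being the binomial series $\sum_k(b)_ky^k/k!=(1-y)^{-b}$ with $y=-C_0w^2/\epsilon$. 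Multiplying by $\epsilon^{-b}$ and folding it into the bracket (legitimate since $\epsilon\ge C_0>0$) gives $(\epsilon+C_0w^2)^{-b}=(\epsilon+C_0(\beta-\beta_0)^2)^{-b}$, which is precisely~\eqref{eq:derseries}.

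The only genuine obstacle is justifying the termwise differentiation and the binomial collapse, both valid inside the disk of convergence $|z|<1$. Under the hypotheses $\epsilon\ge C_0>0$ and $|\beta-\beta_0|\le1$ one has $|z|=C_0(\beta-\beta_0)^2/\epsilon\le1$, and this bound is strict except at the two corner points $\epsilon=C_0$, $\beta=\beta_0\pm1$. Thus on all of the stated domain apart from these two points the computation above is valid verbatim and~\eqref{eq:derseries} holds. At the corners I would argue by continuity: the right-hand side $(\epsilon+C_0(\beta-\beta_0)^2)^{-b}$ is manifestly continuous there, while the restriction $b<2$ forces the coefficients $(b)_k/((2k+1)\,k!)\sim k^{b-2}$ of the undifferentiated series to vanish, so by Abel's theorem that series --- and, with a little care, the identity~\eqref{eq:derseries} itself --- extends continuously up to the endpoints. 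The excluded value $b=1/2$ is irrelevant to the identity itself, the binomial expansion being valid for every $b$; it is omitted only because the corresponding logarithmic borderline is treated separately where Lemma~\ref{lem:hyper} is applied.
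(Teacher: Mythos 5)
Your computation is correct, and in fact the paper offers no proof of this lemma to compare against: it is quoted verbatim from~\cite{SS13}, where it is established by exactly the kind of termwise power-series manipulation you carry out (the cancellation $\left(\tfrac12\right)_k/\left(\tfrac32\right)_k=1/(2k+1)$ against the factor $2k+1$ produced by differentiating $w^{2k+1}$, followed by the binomial collapse $\sum_k (b)_k y^k/k!=(1-y)^{-b}$). The one place where your write-up is looser than it needs to be is the treatment of the corner points $\epsilon=C_0$, $|\beta-\beta_0|=1$: Abel's theorem gives continuity of the \emph{undifferentiated} series up to $|z|=1$, but the identity you are proving concerns its derivative, and for $1\le b<2$ the differentiated series $\sum_k (b)_k(-1)^k/k!$ does not even converge at $z=-1$, so "a little care" is genuinely doing work there. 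A cleaner way to close this is to observe that the argument $z=-C_0(\beta-\beta_0)^2/\epsilon$ is always nonpositive, hence lies in the domain of analyticity of the principal branch of ${}_2F_1$; both sides of~\eqref{eq:derseries} are then real-analytic functions of $\beta-\beta_0$ on all of $\R$ that agree on a neighborhood of $\beta_0$, so they agree everywhere, boundary points included. Your closing remark is also right that the exclusion $b\neq 1/2$ is not needed for the identity itself; it matters only because Lemma~\ref{lem:analcont} requires $a-b\notin\mathbb{Z}$ when the two lemmas are combined in the proof of Lemma~\ref{lem:estlamneg}.
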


\begin{proof}[Proof of Lemma \textup{\ref{lem:estlamneg}}]
A Taylor expansion about $\underline{x}$ yields
\begin{equation} \label{eq:htaylor}
\eps + h(x) - m \sim \eps + C_1(x - \underline{x})^2.
\end{equation}
For $\eps > 0$ small and $b > 1/2$,
\begin{equation} \label{eq:intapprox1}
\begin{split}
\int_{\underline{x} - \delta}^{\underline{x} + \delta} \frac{\mathrm{d}x}{\left(\eps + h(x) - m\right)^b}
&\sim \int_{\underline{x} - \delta}^{\underline{x} + \delta} \frac{\mathrm{d}x}{(\eps + C_1\left(x - \underline{x}\right)^2)^b} \\
&= \frac{1}{\eps^b} \int_{\underline{x} - \delta}^{\underline{x} + \delta} \frac{\mathrm{d}x}{(1 + ( \sqrt{C_1/\eps}(x - \underline{x}))^2)^b} \\
&\sim \frac{2}{\eps^{b-\frac12}\sqrt{C_1}} \int_0^{\pi/2} \cos^{2b-2}\theta \dd\theta \\
&=  \frac{\Gamma \left(b-\frac{1}{2}\right)}{\Gamma (b)} \sqrt{\frac{\pi}{C_1}} \cdot \frac{1}{\eps^{b-\frac12}}.
\end{split}
\end{equation}
Setting $\eps = m - \frac{1}{\lambda\tau}$, we find that for $\tau_* - \tau > 0$ small,
\begin{equation} \label{eq:intapprox2}
\int_{I} \frac{\mathrm{d} x}{(1 - \lambda \tau h(x))^b} \sim \frac{C_2}{(1 - \lambda\tau m)^{b - \frac12}}.
\end{equation}
The case $b = 1/2$ follows by a similar argument. Estimate~\eqref{eq:lamneg}iii) follows trivially if $b \leq 0$; to establish the estimate for $0 < b < \frac12$, we use Lemmas \ref{lem:analcont} and \ref{lem:hyper}. The Taylor approximation~\eqref{eq:htaylor} and Lemma \ref{lem:hyper} imply that
\begin{equation} \label{eq:intapprox3}
\begin{split}
\int_{\underline{x}-\delta}^{\underline{x}+\delta} \frac{\mathrm{d}x}{(\eps + h(x) - m)^b}
&\sim
\int_{\underline{x}-\delta}^{\underline{x}+\delta} \frac{\mathrm{d}x}{(\eps + C_1(x - \underline{x})^2)^b} \\
&=
\frac{2\delta}{\eps^b} {}_2F_1 \left[ \frac12,b,\frac32,-\frac{C_1\delta^2}{\eps} \right]
\end{split} 
\end{equation}
for $\eps \geq C_1\geq \delta^2C_1 > 0$, i.e. $-1 \leq -\frac{\delta^2C_1}{\epsilon} < 0$. If we let $\eps > 0$ become sufficiently small enough, so that $-\frac{\delta^2C_1}{\eps} < -1$, then the analytic continuation formula yields
\begin{equation} \label{eq:analcontapprox}
\frac{2\delta}{\eps^b} {}_2F_1\left[\frac{1}{2}, b; \frac32;-\frac{C_1\delta^2}{\eps}\right]
= \frac{2\delta^{1-2b}}{(1-2b)C_1^b} + \frac{\Gamma\left(b-\frac12\right)}{\Gamma\left(b\right)} \sqrt{\frac{\pi}{C_1}} \eps^{b-\frac12}  + \psi(\eps)
\end{equation}
for $\psi(\epsilon)=\textsl{o}(1)$ as $\eps \to 0$. Since $1-2b>0$,
\begin{equation} \label{eq:intapprox4}
\int_{\underline{x}-\delta}^{\underline{x}+\delta} \frac{\mathrm{d}x}{(\eps + C_1(x - \underline{x})^2)^b}
\sim
\frac{\Gamma\left(b-\frac12\right)}{\Gamma\left(b\right)} \sqrt{\frac{\pi}{C_1}} \eps^{b-\frac12}.
\end{equation}
Setting $\eps = m - \frac{1}{\lambda\tau}$ in~\eqref{eq:intapprox4}, we obtain 
\begin{equation} \label{eq:intapprox5}
\int_{\underline{x}-\delta}^{\underline{x}+\delta} \frac{\mathrm{d}x}{(1 - \lambda\tau h(x))^b} \sim C
\end{equation}
for $\tau_*-\tau>0$ small.
\end{proof}

\end{appendix}

\end{document}